\title[A gradient sampling method for general Lipschitz functions]{A gradient sampling method with complexity guarantees for  Lipschitz functions in high and low dimensions}
\newcommand{\RR}{\mathbb{R}}
\newcommand{\NN}{\mathbb{N}}
\newcommand{\dom}{{\mathrm{dom}}} 
\DeclareMathOperator*{\argmin}{arg\,min}
\numberwithin{equation}{section}
\newcommand{\norm}[1] {\left \| #1 \right \|}
\newcommand{\dist}{\textrm{dist}}
\newcommand{\R}{\mathbb{R}}
\newcommand{\EE}{\mathbb{E}}
\newcommand{\conv}{\mathrm{conv}\,}
\global\long\def\defeq{\stackrel{\mathrm{def}}{=}}%
\newtheorem{thm}{Theorem}[section]
\newtheorem{lem}[thm]{Lemma}
\newtheorem{cor}[thm]{Corollary}
\newtheorem*{remark*}{Remark}
\newcommand{\declarecolor}[2]{\definecolor{#1}{RGB}{#2}\expandafter\newcommand\csname #1\endcsname[1]{\textcolor{#1}{##1}}}
\title{
A gradient sampling method with complexity guarantees for  Lipschitz functions in high and low dimensions
} 
\date{}
\author{} 
\author{
			Damek Davis\thanks{School of ORIE, Cornell University, Ithaca, NY 14850, USA. \texttt{people.orie.cornell.edu/dsd95/}. Research of Davis supported by an Alfred P. Sloan research fellowship and NSF DMS award 2047637.}
			\and
			Dmitriy Drusvyatskiy\thanks{Department of Mathematics, U. Washington,
Seattle, WA 98195; \texttt{www.math.washington.edu/{\raise.17ex\hbox{$\scriptstyle\sim$}}ddrusv}. Research of Drusvyatskiy was supported by NSF DMS-1651851 and CCF-2023166 awards.}
			\and
			Yin Tat Lee\thanks{\texttt{yintat@uw.edu}. Paul G. Allen School of Computer Science and Engineering, U. Washington,
Seattle, WA 98195. Supported by NSF awards CCF-1749609, DMS-1839116, DMS-2023166, CCF-2105772, a Microsoft Research Faculty Fellowship, Sloan Research Fellowship, and Packard Fellowship.}
			\and
			Swati Padmanabhan\thanks{\texttt{pswati@uw.edu}. U. Washington, Seattle, WA 98195.}
			\and
		    Guanghao Ye\thanks{\texttt{ghye@mit.edu}. Department of Mathematics, Massachusetts Institute of Technology, Cambridge, MA 02139. Supported by an MIT Presidential Fellowship. Part of this work was done
			while the author was a student at University of Washington.} 
}
\renewcommand{\@biblabel}[1]{[#1]\hfill}
\definecolor{ForestGreen}{rgb}{0.1333,0.5451,0.1333}
\numberwithin{equation}{section}
\begin{document}

\ifdefined\iscolt

\maketitle 
\begin{abstract}
\citet{pmlr-v119-zhang20p} introduced a novel modification of Goldstein's classical subgradient method, with  an efficiency guarantee of $O(\varepsilon^{-4})$ for minimizing Lipschitz functions. Their work, however, makes use of a nonstandard subgradient oracle model and requires the function to be directionally differentiable. In this paper, we show that both of these assumptions can be dropped by simply adding a small random perturbation in each step of their algorithm. The resulting method works on any Lipschitz  function whose value and gradient can be evaluated at points of differentiability. We additionally present a new cutting plane algorithm that achieves better efficiency in low dimensions: $O(d\varepsilon^{-3})$ for  Lipschitz functions and $O(d\varepsilon^{-2})$ for those that are weakly convex.
\end{abstract}

\else 

\begin{titlepage} 

\maketitle 
\begin{abstract}
Zhang et al.~\citep{pmlr-v119-zhang20p} introduced a novel modification of Goldstein's classical subgradient method, with  an efficiency guarantee of $O(\varepsilon^{-4})$ for minimizing Lipschitz functions. Their work, however, makes use of a nonstandard subgradient oracle model and requires the function to be directionally differentiable. In this paper, we show that both of these assumptions can be dropped by simply adding a small random perturbation in each step of their algorithm. The resulting method works on any Lipschitz  function whose value and gradient can be evaluated at points of differentiability. We additionally present a new cutting plane algorithm that achieves better efficiency in low dimensions: $O(d\varepsilon^{-3})$ for  Lipschitz functions and $O(d\varepsilon^{-2})$ for those that are weakly convex.
\end{abstract}

\thispagestyle{empty}
\end{titlepage}
\fi

\section{Introduction}
The subgradient method \citep{shor1985minimization} is a classical procedure for minimizing a nonsmooth Lipschitz function $f$ on $\RR^d$. Starting from an initial iterate $x_0$, the method computes
\begin{align}\label{eq:subgradientmethod}
x_{t+1} = x_t - \alpha_t v_t \,\text{ where }v_t \in \partial f(x_t).
\end{align}
Here, the positive sequence $\{\alpha_t\}_{t\geq 0}$ is user-specified, and the set $\partial f$ is the \emph{Clarke subdifferential},
$$\partial f(x)=\conv\left\{\lim_{i\to \infty} \nabla f(x_i): x_i\to x, ~x_i\in \dom(\nabla f)\right\}.$$
 In classical circumstances, the subdifferential reduces to familiar objects: for example, when $f$ is $C^1$-smooth at $x$, the subdifferential $\partial f(x)$ consists of only the gradient $\nabla f(x)$, while for convex functions, it reduces to the subdifferential in the sense of convex analysis. 
 
For general Lipschitz functions, the  process \eqref{eq:subgradientmethod} may fail to generate any meaningful limit points due to the existence of highly pathological examples \citep{daniilidis2020pathological}.
Nonetheless, for problems that are weakly convex or semialgebraic, the limit points $\bar x$ of the subgradient method are known to be first-order critical, meaning $0 \in \partial f(\bar x)$. Recall that a function $f$ is called $\rho$-weakly convex if the quadratically perturbed function $x\mapsto f(x)+\frac{\rho}{2}\|x\|^2$ is convex. In particular, convex and smooth functions are weakly convex.  Going beyond asymptotic guarantees, finite-time complexity estimates are known for smooth, convex, or weakly convex problems \citep{ghadimi2013stochastic, reddi2016stochastic, jin2017escape, allen2018make, carmon2018accelerated, davis2018subgradient, fang2018spider, zhou2018stochastic}. 

Modern machine learning, however, has witnessed the emergence of problems far beyond the weakly convex problem class. Indeed, tremendous empirical success has been recently powered by industry-backed solvers, such as Google's TensorFlow and Facebook's PyTorch, which routinely train nonsmooth nonconvex deep networks via (stochastic) subgradient methods. Despite a vast body of work on the asymptotic convergence of subgradient methods for nonsmooth nonconvex problems \citep{benaim2005stochastic, kiwiel2007convergence, majewski2018analysis, davis2020stochastic, bolte2021conservative}, no finite-time nonasymptotic convergence rates were known outside the weakly convex setting until recently, with 
\ifdefined\iscolt\citet{pmlr-v119-zhang20p}\else\citep{pmlr-v119-zhang20p} \fi  making a big leap forward towards this goal. 

In particular, restricting themselves to the class of Lipschitz and directionally differentiable functions, \ifdefined\iscolt\citet{pmlr-v119-zhang20p}\else\citep{pmlr-v119-zhang20p} \fi  developed an efficient algorithm motivated by Goldstein's conceptual subgradient method \ifdefined\iscolt\citet{goldstein1977optimization}\else\citep{goldstein1977optimization}\fi. 
Moreover, this was recently complemented by \citep{kornowski2021oracle} with lower bounds for finding \emph{near}-approximate-stationary points for nonconvex nonsmooth functions. 

One limitation of \citep{pmlr-v119-zhang20p} is that their complexity guarantees and algorithm use a nonstandard first-order oracle whose validity is unclear in examples. Our first contribution is to \emph{replace this assumption with a standard first-order oracle model}. We show (\cref{sec:INGD}) that a small modification of the algorithm of \ifdefined\iscolt\citet{pmlr-v119-zhang20p}\else\citep{pmlr-v119-zhang20p}\fi, wherein one simply adds a small random perturbation in each iteration, works for any Lipschitz function assuming only an oracle that can compute gradients and function values at almost every point of $\RR^d$ in the sense of Lebesgue measure. In particular, such oracles arise from automatic differentiation schemes routinely used in deep learning~\citep{bolte2020mathematical, bolte2021conservative}. Our end result is a randomized algorithm for minimizing any $L$-Lipschitz function that outputs a $(\delta,\epsilon)$-stationary point (Definition~\ref{def:GoldsteinDeltaEpsStationary}) after using at most $\widetilde{\mathcal O}\left(\frac{\Delta L^2}{\epsilon^3 \delta}\log(1/\gamma)\right)$ \footnote{Throughout the paper, we use $\widetilde{\mathcal{O}}({}\cdot{})$ to hide poly-logarithmic factors in $L,\delta,\Delta$, and $\epsilon$.} gradient and function evaluations. Here $\Delta$ is the initial function gap and $\gamma$ is the failure probability.

Having recovered the result of \ifdefined\iscolt\citet{pmlr-v119-zhang20p}\else\citep{pmlr-v119-zhang20p} \fi within the standard first-order oracle model, we then proceed to investigate the following question. 
\begin{quote}
\centering \emph{Can we improve the efficiency of the algorithm in {\bfseries low dimensions}?}
\end{quote}
In addition to being natural from the viewpoint of complexity theory, this question is well-grounded in applications. 
For instance, numerous problems in control theory involve minimization of highly irregular functions of a small number of variables. We refer the reader to the survey \citep[Section 6]{burke2020gradient} for an extensive list of examples, including
Chebyshev approximation by exponential sums, spectral and pseudospectral abscissa minimization, maximization of the “distance to instability”, and fixed-order controller design by static output feedback. We note that for many of these problems, the gradient sampling method of \cite{burke2020gradient} is often used. Despite its ubiquity in applications, the gradient sampling method does not have finite-time efficiency guarantees. The algorithms we present here offer an alternative approach with a complete complexity theory. 

The second contribution of our paper is \emph{an affirmative answer to the highlighted question}. We 
present a novel algorithm that uses $\widetilde{\mathcal O}\left(\frac{\Delta L d}{\epsilon^2 \delta}\log(1/\gamma)\right)$ calls to our (weaker) oracle. Thus we are able to trade off the factor $L\epsilon^{-1}$ with $d$. Further, if the function is $\rho$-weakly convex, the complexity improves to $\widetilde{\mathcal O}\left(\frac{\Delta d}{\epsilon \delta}\log(\rho)\right)$, which matches the complexity in $\delta=\epsilon$ of gradient descent for smooth minimization. Strikingly, the dependence on the weak convexity constant $\rho$ is only logarithmic.

The main idea underlying our improved dependence on $\epsilon$ in low dimensions is outlined next. The algorithm of \citep{pmlr-v119-zhang20p} comprises of an outer loop with ${\mathcal O}\left( \frac{\Delta}{\epsilon	\delta} \right)$ iterations, each performing either a decrease in the function value or an ingenious random sampling step to update the descent direction. Our observation, central to improving the $\varepsilon$ dependence, is that the violation of the descent condition can be transformed into a gradient oracle for the problem of finding a minimal norm element of the Goldstein subdifferential. This gradient oracle may then be used within a cutting plane method, which achieves better $\varepsilon$ dependence at the price of a dimension factor (\cref{sec:low-dimension}). 

\paragraph{Notation.}{Throughout, we let $\RR^d$ denote a $d$-dimensional Euclidean space equipped with a dot product $\langle \cdot,\cdot\rangle$ and the Euclidean norm $\|x\|_2=\sqrt{\langle x,x \rangle}$. The symbol $\mathbb{B}_r(x)$ denotes an open Euclidean ball of radius $r>0$ around a point $x$. Throughout, we fix a function $f\colon\R^d\to\R$ that is $L$-Lipschitz, and let $\dom(\nabla f)$ denote the set of points where $f$ is differentiable---a full Lebesgue measure set by Rademacher's theorem. The symbol $f'(x,u)\defeq\lim_{\tau\downarrow 0} \tau^{-1}(f(x+\tau u)-f(x))$ denotes the directional derivative of $f$ at $x$ in direction $u$, whenever the limit exists.}

\section{Interpolated Normalized Gradient Descent}\label[sec]{sec:INGD}
In this section, we describe the results in \citep{pmlr-v119-zhang20p} and our modified subgradient method that achieves finite-time guarantees in obtaining $(\delta, \epsilon)$-stationarity for an $L$-Lipschitz function $f:\R^d\rightarrow \R$. The main construction we use is the Goldstein subdifferential \citep{goldstein1977optimization}. 
\begin{definition}[Goldstein subdifferential]\label[def]{def:GoldsteinDeltaEpsStationary}
{\textrm Consider a locally Lipschitz function $f\colon\R^d\to\R$, a point $x\in\R^d$, and a parameter $\delta>0$. The {\em Goldstein subdifferential} of $f$ at $x$ is the set
$$\partial_{\delta} f(x)\defeq\conv \Big(\bigcup_{y\in \mathbb{B}_{\delta}(x)}\partial f(y)\Big).$$ A point $x$ is called $(\delta, \epsilon)$-stationary if $\dist(0,\partial_{\delta}f(x))\leq \epsilon$.
}
\end{definition}
Thus, the Goldstein subdifferential of $f$ at $x$ is the convex hull of all Clarke subgradients at points in a  $\delta$-ball around $x$. Famously, \ifdefined\iscolt\citet{goldstein1977optimization}\else\citep{pmlr-v119-zhang20p} \fi showed that one can significantly decrease the value of $f$ by taking a step in the direction of the minimal norm element of $\partial_{\delta} f(x)$. Throughout the rest of the section, we fix $\delta\in (0,1)$ and use the notation
$$\hat g\defeq g/\|g\|_2\, \textrm{ for any nonzero vector } g\in \R^d.$$ 

\begin{thm}[\ifdefined\iscolt\citet{goldstein1977optimization}\else\citep{goldstein1977optimization}\fi]\label[thm]{thm:gold_decr}
Fix a point $x$, and let $g$ be a minimal norm element of $\partial_{\delta} f(x)$. Then as long as $g\neq 0$, we have $f\left(x-\delta\hat g\right)\leq f(x)-\delta \|g\|_2$. 
\end{thm}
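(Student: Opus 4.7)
The plan is to reduce the claim to a one-dimensional integration argument along the segment from $x$ to $x-\delta\hat g$. Define $\phi(t)\defeq f(x-t\hat g)$ for $t\in[0,\delta]$. Since $f$ is $L$-Lipschitz and $\hat g$ is a unit vector, $\phi$ is $L$-Lipschitz on $[0,\delta]$, hence absolutely continuous, so
$$f(x-\delta\hat g)-f(x)=\phi(\delta)-\phi(0)=\int_0^\delta \phi'(t)\,dt,$$
with $\phi'(t)$ existing for a.e.\ $t$. It therefore suffices to show $\phi'(t)\leq -\|g\|_2$ at almost every $t\in[0,\delta]$.

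For the pointwise bound I would use the minimum-norm characterization of $g$. Since $g$ minimizes $\|\cdot\|_2$ over the closed convex set $\partial_\delta f(x)$, the first-order optimality condition gives $\langle g,v-g\rangle\geq 0$, i.e., $\langle \hat g,v\rangle\geq \|g\|_2$ for every $v\in\partial_\delta f(x)$. Now for any $t\in[0,\delta)$ the point $y_t:=x-t\hat g$ lies in $\mathbb{B}_\delta(x)$, so by definition $\partial f(y_t)\subseteq\partial_\delta f(x)$, and hence $\langle \hat g,w\rangle\geq\|g\|_2$ for every $w\in\partial f(y_t)$.

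To link $\phi'$ to $\partial f(y_t)$ I would invoke the Clarke chain rule for Lipschitz compositions: $\partial \phi(t)\subseteq\{-\langle \hat g,w\rangle: w\in\partial f(y_t)\}$. At any $t$ where $\phi$ is differentiable one has $\phi'(t)\in\partial\phi(t)$, so $\phi'(t)\leq -\|g\|_2$ by the previous paragraph. Integrating this bound over $[0,\delta]$ yields $f(x-\delta\hat g)-f(x)\leq -\delta\|g\|_2$, as claimed.

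The only real subtlety is step three, namely relating the one-dimensional derivative $\phi'(t)$ to the Clarke subdifferential of $f$ along the line $\{y_t\}$. A naive appeal to Rademacher's theorem is not enough, since the segment is Lebesgue-null in $\RR^d$ and $f$ need not be differentiable at any $y_t$. The clean way around this is the Clarke-subdifferential chain rule for Lipschitz functions (Clarke, \emph{Optimization and Nonsmooth Analysis}, Thm.~2.3.10), which bypasses pointwise differentiability of $f$ entirely and only uses absolute continuity of $\phi$.
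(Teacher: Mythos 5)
The paper states \cref{thm:gold_decr} without proof, attributing it entirely to \citet{goldstein1977optimization}, so there is no in-paper argument to compare your proposal against. That said, your proof is correct and is essentially the standard argument for this result. The reduction to the one-dimensional restriction $\phi(t)=f(x-t\hat g)$, the use of absolute continuity to write $f(x-\delta\hat g)-f(x)=\int_0^\delta\phi'(t)\,dt$, and the first-order optimality of $g$ (giving $\langle \hat g, w\rangle\geq\|g\|_2$ for all $w\in\partial f(y_t)\subseteq\partial_\delta f(x)$ whenever $t<\delta$) are all exactly right, and the exclusion of the single endpoint $t=\delta$ is harmless for the integral. Your identification of the genuine subtlety --- that Rademacher's theorem says nothing along a fixed Lebesgue-null segment, so one cannot pretend $\nabla f(y_t)$ exists --- is precisely the point that makes a naive proof fail, and it is also why the paper's \cref{lem:generic_fubini} (which only handles \emph{almost every} direction) would not suffice here.

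One mild simplification worth noting: you do not need the full chain-rule inclusion $\partial\phi(t)\subseteq\{-\langle \hat g,w\rangle:w\in\partial f(y_t)\}$. At any $t$ where $\phi'(t)$ exists, it coincides with the one-sided directional derivative $f'(y_t;-\hat g)$, and for any Lipschitz $f$ this is bounded above by Clarke's generalized directional derivative, which is the support function of $\partial f(y_t)$:
\[
\phi'(t)=f'(y_t;-\hat g)\leq f^{\circ}(y_t;-\hat g)=\max_{w\in\partial f(y_t)}\langle w,-\hat g\rangle=-\min_{w\in\partial f(y_t)}\langle w,\hat g\rangle\leq-\|g\|_2.
\]
This replaces the appeal to Clarke's chain-rule theorem with the defining identity $f^{\circ}(y;v)=\max_{w\in\partial f(y)}\langle w,v\rangle$ together with the elementary inequality $f'\le f^{\circ}$, so the only nontrivial input is absolute continuity of $\phi$; the conclusion is the same.
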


 \Cref{thm:gold_decr} immediately motivates the following conceptual descent algorithm:
\begin{equation}\label[eq]{eq:conceptualDescentAlg}
	x_{t+1}=x_t-\delta \hat g_t,\, \textrm{ where } g_t\in\argmin_{g\in \partial_{\delta} f(x)} \|g\|_2.
\end{equation}
In particular, \cref{thm:gold_decr} guarantees that, defining $\Delta\defeq f(x_0)-\min f$, the {\em approximate stationarity condition} 
 $$\min_{t=1,\ldots, T}~\|g_t\|_2\leq \epsilon\, \textrm{ holds after } T=\mathcal {O}\left(\frac{\Delta}{\delta\epsilon}\right)\, \textrm{ iterations of }~\eqref{eq:conceptualDescentAlg}.$$   
Evaluating the minimal norm element of $\partial_{\delta} f(x)$ is impossible in general, and therefore the descent method described in~\eqref{eq:conceptualDescentAlg} cannot be applied directly. Nonetheless it serves as a guiding principle for implementable algorithms. Notably, the gradient sampling algorithm \citep{burke2005robust} in each iteration forms polyhedral approximations $K_t$ of $\partial_{\delta} f(x_t)$ by sampling gradients in the ball $\mathbb{B}_{\delta}(x)$ and computes search directions $g_t\in \argmin_{g\in K_t} \|g\|_2$. These gradient sampling algorithms, however, have only asymptotic convergence guarantees \citep{burke2020gradient}. 

The recent paper \citep{pmlr-v119-zhang20p} remarkably shows that for any $x\in \R^d$ one \emph{can} find an \emph{approximate} minimal norm element of  $\partial_{\delta} f(x)$ using a number of subgradient computations that is independent of the dimension. The idea of their procedure is as follows. Suppose that we have a trial vector $g\in \partial_{\delta} f(x)$ (not necessarily a minimal norm element) satisfying
\begin{equation}\label[ineq]{eqn:approx_desce}
f\left(x-\delta\hat g\right)\geq f(x)-\frac{\delta}{2} \|g\|_2.
\end{equation}
That is, the decrease in function value is not as large as guaranteed by Theorem~\ref{thm:gold_decr} for the true minimal norm subgradient.
 One would like to now find a vector $u\in  \partial_{\delta} f(x)$ so that the norm of some convex combination $(1-\lambda)  g+\lambda u$ is smaller than that of $g$. A short computation shows that this is sure to be the case for all small $\lambda>0$ as long as $\langle u,g\rangle\leq \|g\|_2^2$. The task therefore  reduces to:
  $$\textrm{find some }u\in \partial_{\delta} f(x)\quad \textrm{satisfying}\quad \langle u,g\rangle\leq \|g\|_2^2.$$ The ingenious idea of \ifdefined\iscolt\citet{pmlr-v119-zhang20p}\else\citep{pmlr-v119-zhang20p} \fi is a randomized procedure for establishing exactly that in expectation. Namely, suppose for the moment that $f$ happens to be differentiable along the segment $[x,x-\delta \hat g]$; we will revisit this assumption shortly. Then the fundamental theorem of calculus, in conjunction with \eqref{eqn:approx_desce}, yields
 \begin{equation}\label[ineq]{eqn:fund_thm_smooth}
\frac{1}{2}\|g\|_2\geq \frac{f(x)-f\left(x-\delta\hat g\right)}{\delta}=\frac{1}{\delta}\int_0^\delta \langle \nabla f(x-\tau\hat g), \hat g\rangle ~d\tau.
\end{equation}
Consequently, a point $y$ chosen uniformly at random in the segment $[x,x-\delta \hat g]$ satisfies
\begin{equation}\label[ineq]{eqn:fund_thm_smooth2}
\EE \langle \nabla f(y),  g\rangle\leq \frac{1}{2}\|g\|_2^2.
\end{equation}
Therefore the vector $u=\nabla f(y)$ can act as the subgradient we seek. Indeed, the following lemma shows that, in expectation, the minimal norm  element of $[g,u]$ is significantly shorter than $g$. The proof is extracted from that of \cite[Theorem 8]{pmlr-v119-zhang20p}.

\begin{lem}[\ifdefined\iscolt\citet{pmlr-v119-zhang20p}\else\citep{pmlr-v119-zhang20p}\fi]\label[lem]{lem:dist_decr}	Fix a vector $g\in \R^d$, and let $u\in\R^d$ be a random vector satisfying $\EE\langle u,g\rangle< \frac{1}{2}\|g\|_2^2$. Suppose moreover that the inequality $\|g\|_2,\|u\|_2\leq L$ holds for some $L<\infty$. 
Then the minimal-norm vector $z$ in the segment $[g,u]$ satisfies:
$$\EE\|z\|_2^2\leq \|g\|_2^2-\frac{\|g\|_2^4}{16L^2}.$$
\end{lem}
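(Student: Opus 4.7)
The plan is to bound $\mathbb{E}\|z\|_2^2$ by the squared norm of a specific (non-optimal) point on the segment, chosen to make the bound as tight as possible. For any $\lambda \in [0,1]$, let $z_\lambda \defeq (1-\lambda)g+\lambda u$; since $z$ is the minimal-norm element of $[g,u]$, we have $\|z\|_2^2 \le \|z_\lambda\|_2^2$ pointwise, hence also in expectation. The strategy is to expand this quadratic in $\lambda$, take expectations, use the two hypotheses to bound the coefficients, and then optimize $\lambda$.

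Expanding gives
\begin{equation*}
\|z_\lambda\|_2^2 = \|g\|_2^2 + 2\lambda\langle g, u-g\rangle + \lambda^2 \|u-g\|_2^2.
\end{equation*}
For the linear term, the hypothesis $\EE\langle u,g\rangle < \tfrac12\|g\|_2^2$ gives $\EE\langle g, u-g\rangle < -\tfrac12\|g\|_2^2$. For the quadratic term, $\|u-g\|_2^2 \le 2(\|u\|_2^2+\|g\|_2^2) \le 4L^2$. Combining these bounds yields
\begin{equation*}
\EE\|z_\lambda\|_2^2 \;\le\; \|g\|_2^2 - \lambda\|g\|_2^2 + 4\lambda^2 L^2,
\end{equation*}
valid for every $\lambda \in [0,1]$.

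The remaining step is to minimize the right-hand side over a feasible $\lambda$. The unconstrained minimizer of the quadratic $\lambda \mapsto -\lambda\|g\|_2^2+4\lambda^2 L^2$ is $\lambda^\star = \|g\|_2^2/(8L^2)$, and since $\|g\|_2 \le L$ we have $\lambda^\star \le 1/8$, so $\lambda^\star$ is admissible. Substituting back gives
\begin{equation*}
\EE\|z\|_2^2 \;\le\; \|g\|_2^2 - \tfrac{\|g\|_2^4}{8L^2} + 4L^2\cdot\tfrac{\|g\|_2^4}{64L^4} \;=\; \|g\|_2^2 - \tfrac{\|g\|_2^4}{16L^2},
\end{equation*}
as required.

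There is no real obstacle here beyond picking the right $\lambda$: the proof is a one-variable quadratic optimization that translates the two structural hypotheses (the expected inner-product bound and the diameter bound on $[g,u]$) into linear and quadratic coefficients, respectively. The only subtlety worth flagging is ensuring $\lambda^\star \in [0,1]$, which uses the hypothesis $\|g\|_2 \le L$; without it one would have to clip $\lambda$ at $1$, but that case is uninteresting since then $\|g\|_2^4/(16L^2) \le \|g\|_2^2/16$ is still easily absorbed by the $-\lambda\|g\|_2^2$ term.
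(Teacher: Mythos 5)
Your proof is correct and is essentially identical to the paper's: both expand $\|g+\lambda(u-g)\|_2^2$, bound the linear term via $\EE\langle u,g\rangle\le\tfrac12\|g\|_2^2$ and the quadratic term via $\|u-g\|_2\le 2L$, then substitute $\lambda=\|g\|_2^2/(8L^2)$. The only difference is that you make explicit the admissibility check $\lambda^\star\le 1/8$, which the paper leaves implicit.
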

\begin{proof}
Applying $\EE \langle u, g\rangle \leq \frac{1}{2}\|g\|^2_2$ and $\|g\|_2, \|u\|_2\leq L$, we have, for any $\lambda\in (0,1)$,
\begin{align*}
\EE\|z\|_2^2\leq\EE\|g+\lambda(u-g)\|_2^2&=\|g\|_2^2+2\lambda\EE\langle g,u-g\rangle+\lambda^2\EE\|u-g\|_2^2\\
&\leq\|g\|_2^2-\lambda\|g\|_2^2+4\lambda^2L^2.
\end{align*}
Plugging in the value $\lambda=\frac{\|g\|_2^2}{8L^2}\in (0,1)$ minimizes the right hand side and completes the proof.
\end{proof}

The last technical difficulty to overcome is the requirement that $f$ be differentiable along the line segment $[g,u]$. This assumption is crucially used to obtain \eqref{eqn:fund_thm_smooth} and \eqref{eqn:fund_thm_smooth2}. To cope with this problem, \ifdefined\iscolt\citet{pmlr-v119-zhang20p}\else\citep{pmlr-v119-zhang20p} \fi introduce extra assumptions on the function $f$ to be minimized and assume a nonstandard oracle access to subgradients.

We show, using Lemma~\ref{lem:generic_fubini}, that no extra assumptions are needed if one slightly perturbs $g$. 

\begin{lem}\label[lem]{lem:generic_fubini}
Let $f\colon\R^d\to\R$ be a Lipschitz function, and fix a point $x\in\R^d$. Then there exists a set $\mathcal{D}\subset \R^d$ of full Lebesgue measure such that for every $y\in \mathcal{D}$, the line spanned by $x$ and $y$ intersects  $\dom (\nabla f)$ in a  full Lebesgue measure set in $\R$. Then, for every $y\in \mathcal{D}$ and all $\tau\in\R$, we have
$$f(x+\tau(y-x))-f(x)=\int_{0}^{\tau}\langle \nabla f(x+s (y-x)),y-x \rangle~ds.$$
\end{lem}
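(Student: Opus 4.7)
The plan is to combine Rademacher's theorem with a two-way application of Fubini and then appeal to absolute continuity of Lipschitz functions restricted to a line.

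Since $f$ is $L$-Lipschitz, Rademacher's theorem guarantees that $N\defeq \R^d\setminus \dom(\nabla f)$ is Lebesgue null in $\R^d$. The first half of the conclusion asks for a full-measure set $\mathcal{D}\subset\R^d$ such that, for each $y\in\mathcal{D}$, the line through $x$ and $y$ meets $N$ in a one-dimensional null set. To produce $\mathcal{D}$ I would introduce the auxiliary set
\[
E\defeq \{(y,\tau)\in\R^d\times\R : x+\tau(y-x)\in N\}.
\]
For each fixed $\tau\neq 0$, the map $y\mapsto x+\tau(y-x)$ is an invertible affine bijection of $\R^d$, so the $\tau$-slice of $E$ is an affine image of $N$ and hence null in $\R^d$; the single $\tau=0$ slice contributes zero after integration in $\tau$. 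Fubini therefore yields $|E|=0$ in $\R^{d+1}$, and a second application of Fubini with the order of integration reversed produces a full-measure set $\mathcal{D}\subset\R^d$ on which the $y$-slice $\{\tau : x+\tau(y-x)\in N\}$ has one-dimensional measure zero. This is exactly the asserted property of $\mathcal{D}$.

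For the integral representation, I would fix $y\in\mathcal{D}$ and set $\phi(\tau)\defeq f(x+\tau(y-x))$. Because $f$ is $L$-Lipschitz, $\phi$ is $L\|y-x\|_2$-Lipschitz on $\R$ and therefore absolutely continuous. At every $s$ with $x+s(y-x)\in\dom(\nabla f)$---a set of full one-dimensional measure, by the construction of $\mathcal{D}$---the ordinary chain rule applied to the Fr\'echet-differentiable $f$ at that point yields $\phi'(s)=\langle \nabla f(x+s(y-x)), y-x\rangle$. The fundamental theorem of calculus for absolutely continuous functions then delivers $\phi(\tau)-\phi(0)=\int_0^\tau \phi'(s)\,ds$ for every $\tau\in\R$, which is precisely the claimed formula.

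The main conceptual step, and the only one requiring care, is the double Fubini: one needs $E$ to be Borel measurable. This holds because $\dom(\nabla f)$ is Borel (it is the set where a limit of continuous difference quotients of $f$ exists, a standard countable intersection/union construction) and because $(y,\tau)\mapsto x+\tau(y-x)$ is continuous. The same Borel measurability of $\nabla f$ on its domain also ensures that $s\mapsto \langle \nabla f(x+s(y-x)), y-x\rangle$ is measurable along each good line, justifying the final integration. No step uses anything about $f$ beyond the Lipschitz hypothesis, so the lemma holds in the full generality claimed.
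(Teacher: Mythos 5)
Your proof is correct and uses the same ingredients as the paper's: Rademacher's theorem, Fubini, and absolute continuity of a Lipschitz function restricted to a line. The only real difference is cosmetic: where you form the set $E\subset\R^d\times\R$ and apply Fubini twice on a Cartesian product (slicing by $\tau$, noting each $\tau$-slice is an affine preimage of the null set $N$), the paper instead writes Lebesgue measure on $\R^d$ in polar coordinates around $x$ and applies Fubini once, obtaining a full-measure set of directions $\mathcal{Q}\subset\mathbb{S}^{d-1}$ and then cone-ing off to get $\mathcal{D}$. Both routes are equally short; yours is arguably a touch more self-contained (no appeal to the polar decomposition of Lebesgue measure), while the paper's makes the conical structure of $\mathcal{D}$ explicit. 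One small wording slip in your argument: the $\tau$-slice of $E$ is the \emph{preimage} of $N$ under $y\mapsto x+\tau(y-x)$, not its image, though since the map is an affine bijection the two notions coincide up to composing with the inverse, so the conclusion that the slice is null stands.
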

\begin{proof}	
Without loss of generality, we may assume $x=0$ and $f(x)=0$. Rademacher's theorem guarantees that $\dom (\nabla f)$ has full Lebesgue measure in $\R^d$. Fubini's theorem then directly implies that there exists a set $\mathcal{Q}\subset \mathbb{S}^{d-1}$ of full Lebesgue measure  within the sphere $\mathbb{S}^{d-1}$ such that for every $y\in \mathcal{Q}$, the intersection $\R_+\{y\}\cap (\dom (\nabla f))^c$ is Lebesgue null in $\R$. It follows immediately that the set $\mathcal{D}=\{\tau y: \tau>0, y\in Q\}$ has full Lebesgue measure in $\R^d$.
Fix now a point $y\in \mathcal D$ and any $\tau\in\R_+$. Since $f$ is Lipschitz, it is absolutely continuous on any line segment and therefore
$$f(x+\tau(y-x))-f(x)=\int_{0}^{\tau} f'(x+s(y-x),y-x)~ds=\int_{0}^{\tau}\langle \nabla f(x+s (y-x)),y-x \rangle~ds.$$
The proof is complete.
\end{proof}

We now have all the ingredients to present a modification of the algorithm from \citep{pmlr-v119-zhang20p}, which, under a standard first-order oracle model, either significantly decreases the objective value or finds an approximate minimal norm element of $\partial_{\delta}f$.

\begin{algorithm}[!ht]
	\caption{$\mathtt{MinNorm}(x)$}\label[alg]{alg:approx_desc}
	\KwIn{$x$, $\delta >0$, and $\epsilon>0$.}
	
	 Let $k=0$, $g_0 = \nabla f(\zeta_0)$ where $\zeta_0 \sim \mathbb{B}_{\delta}(x)$.\; 
	 
	\While {$\|g_k\|_2>\epsilon$ and $\frac{\delta}{4} \|g_k\|_2\geq f(x)-f\left(x-\delta\hat g_k\right)$}{
	
		Choose any $r$ satisfying $ 0<r<\|g_k\|_2\cdot\sqrt{1-(1-\tfrac{\|g_k\|_2^2}{128L^2})^2}$. \;
		 
		Sample $\zeta_k ~\textrm{uniformly~from}~\mathbb{B}_{r}(g_k)$.\;
		
		Choose $y_k$ uniformly at random from the segment $ [x, x-\delta \widehat{\zeta}_k]$.\; 
		
		$g_{k+1}=\argmin_{z\in [g_k,\nabla f(y_k)]} \|z\|_2$.\;
		
		$k=k+1$.\; 
		
	}
	Return $g_k$.\; 
	
\end{algorithm}

The following theorem establishes the efficiency of Algorithm~\ref{alg:approx_desc}, and its proof is a small modification of that of \cite[Lemma 13]{pmlr-v119-zhang20p}.

\begin{thm}\label[thm]{thm:minnorm}
Let $\{g_k\}$ be generated by $\mathtt{MinNorm}(x)$. Fix an index $k\geq 0$, and define the stopping time $\tau \defeq \inf\left\{ k \colon f(x - \delta \hat g_k) < f(x) - \delta \| g_k\|_2/4\, \text{ or  } \|g_k\|_2 \leq \epsilon\right\}$. Then,  we have
$$\EE\left[\|g_{k}\|_2^2  1_{\tau > k}\right]\leq \frac{16L^2}{16+k}.$$
\end{thm}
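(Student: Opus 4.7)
The plan is to set up a supermartingale-style recursion for $a_k \defeq \mathbb{E}[\|g_k\|_2^2\, \mathbf{1}_{\tau>k}]$, reducing the theorem to a single inductive calculation. Let $\mathcal{F}_k$ denote the $\sigma$-algebra generated by all the randomness $\zeta_0,\zeta_1,y_1,\ldots,\zeta_{k-1},y_{k-1}$ used in the construction of $g_k$, and observe that $\{\tau>k\}$ depends only on $g_0,\ldots,g_k$, so it is $\mathcal{F}_k$-measurable. The core technical claim is the one-step descent estimate
\begin{equation*}
\mathbf{1}_{\tau>k}\cdot\mathbb{E}\big[\|g_{k+1}\|_2^2 \,\big|\, \mathcal{F}_k\big] \;\leq\; \mathbf{1}_{\tau>k}\left(\|g_k\|_2^2 - \frac{\|g_k\|_2^4}{16L^2}\right).
\end{equation*}

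To prove it, work on $\{\tau>k\}$, where the while-loop guard gives $f(x) - f(x - \delta \hat g_k) \leq \tfrac{\delta}{4}\|g_k\|_2$. Since $\zeta_k$ is uniform on the ball $\mathbb{B}_r(g_k)$, its law is absolutely continuous, so Lemma~\ref{lem:generic_fubini} applies to the direction $\hat{\zeta}_k$ almost surely and yields
\begin{equation*}
\mathbb{E}\big[\langle \nabla f(y_k), \hat{\zeta}_k\rangle \,\big|\, \zeta_k\big] = \frac{f(x) - f(x - \delta\hat{\zeta}_k)}{\delta}.
\end{equation*}
Using $L$-Lipschitzness to transfer the descent-violation from $\hat g_k$ to $\hat{\zeta}_k$, writing $\langle \nabla f(y_k), g_k\rangle = \|\zeta_k\|_2\, \langle \nabla f(y_k), \hat{\zeta}_k\rangle + \langle \nabla f(y_k), g_k - \zeta_k\rangle$, and bounding the residual by $Lr$, one obtains, after taking expectation over $\zeta_k$, the bound
\begin{equation*}
\mathbb{E}\big[\langle \nabla f(y_k), g_k\rangle \,\big|\, \mathcal{F}_k\big] \leq \tfrac{1}{2}\|g_k\|_2^2.
\end{equation*}
This is where the explicit formula for $r$ enters: it is tuned precisely so that all the $O(r)$, $O(r\|g_k\|_2)$, and $O(r^2/\|g_k\|_2)$ perturbation errors fit under the budget $\tfrac{1}{4}\|g_k\|_2^2$. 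Applying Lemma~\ref{lem:dist_decr} conditionally on $\mathcal{F}_k$ with $g=g_k$ and $u = \nabla f(y_k)$ then yields the one-step estimate.

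To close the proof, note that $\mathbf{1}_{\tau>k+1}\leq \mathbf{1}_{\tau>k}$ and the latter is $\mathcal{F}_k$-measurable, so taking expectation of the one-step estimate and invoking Cauchy--Schwarz in the form $a_k^2 \leq \mathbb{P}(\tau>k)\cdot\mathbb{E}[\mathbf{1}_{\tau>k}\|g_k\|_2^4] \leq \mathbb{E}[\mathbf{1}_{\tau>k}\|g_k\|_2^4]$ gives the scalar recursion $a_{k+1} \leq a_k - a_k^2/(16L^2)$. Since $\|g_0\|_2\leq L$, the initial value obeys $a_0 \leq L^2 = 16L^2/16$. A straightforward induction, using that $t\mapsto t - t^2/(16L^2)$ is monotone on $[0,8L^2]$ and that $(16+k)^2 \geq (15+k)(17+k)$, then delivers $a_k \leq 16L^2/(16+k)$. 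The main obstacle is the middle step: carefully tracking the error between the segment $[x, x - \delta \hat g_k]$, on which the descent-violation inequality is available, and the segment $[x, x - \delta \hat{\zeta}_k]$, on which Lemma~\ref{lem:generic_fubini} supplies the fundamental theorem of calculus, so that the clean bound $\tfrac{1}{2}\|g_k\|_2^2$ emerges with the right constants.
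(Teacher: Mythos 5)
Your high-level plan matches the paper's: establish the one-step descent inequality via Lemma~\ref{lem:generic_fubini}, apply Lemma~\ref{lem:dist_decr}, and close the scalar recursion. The way you close the recursion (Cauchy--Schwarz $a_k^2\leq \EE[\mathbf 1_{\tau>k}\|g_k\|_2^4]$ plus an induction exploiting $(16+k)^2\geq(15+k)(17+k)$) is equivalent to the paper's Jensen-plus-telescope argument on $1/a_k$ and is correct.

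However, the middle step has a genuine constant gap. You decompose $\langle\nabla f(y_k),g_k\rangle = \langle\nabla f(y_k),\zeta_k\rangle + \langle\nabla f(y_k),g_k-\zeta_k\rangle$ and then pull out $\|\zeta_k\|_2$ from the first term. This is not what the paper does, and it does not close with the stated choice of $r$. After applying the loop condition and Lipschitz transfer, the first term is bounded by $\|\zeta_k\|_2\bigl(\tfrac14\|g_k\|_2 + L\|\hat\zeta_k-\hat g_k\|_2\bigr)$. Since $\|\zeta_k\|_2$ can exceed $\|g_k\|_2$ by up to $r$, you pick up an extra $O(r\|g_k\|_2)$ error that the paper's decomposition never incurs. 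Running the arithmetic with $r<\|g_k\|_2^2/(8L)$ and $\|\hat\zeta_k-\hat g_k\|_2<\|g_k\|_2/(8L)$ gives
\[
\EE\bigl[\langle\nabla f(y_k),g_k\rangle \,\big|\, \mathcal F_k\bigr] \;<\; \bigl(\|g_k\|_2 + r\bigr)\cdot\tfrac{3}{8}\|g_k\|_2 + Lr \;<\; \tfrac12\|g_k\|_2^2 + \tfrac{3\|g_k\|_2^3}{64L},
\]
which is strictly larger than the budget $\tfrac12\|g_k\|_2^2$ needed to invoke Lemma~\ref{lem:dist_decr}. The paper avoids this by working entirely with unit vectors: it decomposes $\langle\nabla f(y_k),\hat g_k\rangle = \langle\nabla f(y_k),\hat\zeta_k\rangle + \langle\nabla f(y_k),\hat g_k-\hat\zeta_k\rangle$, bounds the two perturbation terms each by $L\|\hat g_k-\hat\zeta_k\|_2 < \tfrac{\|g_k\|_2}{8}$, obtains $\EE_k\langle\nabla f(y_k),\hat g_k\rangle < \tfrac12\|g_k\|_2$, and only at the very end multiplies by $\|g_k\|_2$. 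That single change removes the stray $O(r\|g_k\|_2)$ term and is what makes the constant in $r$ work. Your decomposition could be repaired by tightening the constraint on $r$, but it does not go through as written with the algorithm's stated choice.
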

\begin{proof}
	Fix an index $k$, and let $\mathbb{E}_k[\cdot]$ denote the conditional expectation on $g_k$.
Suppose we are in the event $\{ \tau >k\}$. 
Taking into account the Lipschitz continuity of $f$ and Lemma~\ref{lem:generic_fubini}, we deduce that almost surely, conditioned on $g_k$, the following estimate holds: 
	\begin{align*}
\frac{1}{4}\|g_k\|_2\geq \frac{f(x)-f\left(x-\delta\hat g_k\right)}{\delta}&\geq \frac{f(x)-f(x-\delta\cdot \hat \zeta_k)}{\delta}-L\|\hat g_k-\hat \zeta_k\|_2\\
&=\frac{1}{\delta}\int_0^\delta \langle \nabla f(x-s\hat \zeta_k), \hat \zeta_k\rangle ~ds-L\|\hat g_k-\hat\zeta_k\|_2\\
&\geq\frac{1}{\delta} \int_0^\delta \langle \nabla f(x-s\hat \zeta_k), \hat g_k\rangle ~ds-2L\|\hat g_k-\hat\zeta_k\|_2\\
&= \EE_k\langle \nabla f(y_k),\hat g_k\rangle- 2L\|\hat g_k-\hat\zeta_k\|_2.
\end{align*}
Rearranging yields 
$\EE_k\langle \nabla f(y_k),\hat g_k\rangle\leq \frac{1}{4}\|g_k\|_2+2L\|\hat g_k-\hat \zeta_k\|$. Simple algebra shows $\|\hat g_k-\hat \zeta_k\|_2^2\leq 2(1-\sqrt{1-r^2/\|g_k\|_2^2})\leq \frac{\|g_k\|_2^2}{64L^2}$. Therefore, we infer that $\EE_k\langle \nabla f(y_k),\hat g_k\rangle< \frac{1}{2}\|g_k\|_2$. 
Lemma~\ref{lem:dist_decr} then guarantees that
$$\EE_k[\|g_{k+1}\|_2^2 1_{\tau > k}]\leq \left(\|g_k\|_2^2-\frac{\|g_k\|_2^4}{16L^2}\right)1_{\tau > k}.$$
Define $b_k := \|g_{k}\|_2^21_{\tau > k}$ for all $k \geq 0$. Then the tower rule for expectations yields
$$\EE b_{k+1}\leq \EE[\|g_{k+1}\|_2^2 1_{\tau > k}] \leq \EE\left[\left(1-\frac{b_k}{16L^2}\right)b_k\right]\leq \left(1-\frac{\EE b_k}{16L^2}\right)\EE b_k,$$
by Jensen's inequality applied to the concave function $t\mapsto (1-t/16L^2)t$. Setting $a_k=\EE b_k/L^2$, this inequality becomes $a_{k+1}\leq a_k-a_k^2/16$, which, upon rearranging, yields $\frac{1}{a_{k+1}}\geq \frac{1}{a_k(1-a_k/16)}\geq \frac{1}{a_k}+ \frac{1}{16}$. Iterating the recursion and taking into account $a_0\leq 1$ completes the proof.
\end{proof}

An immediate consequence of \cref{thm:minnorm} is that $\mathtt{MinNorm}(x)$ terminates with high-probability.
\begin{cor}\label[cor]{cor:MinNorm} $\mathtt{MinNorm}(x)$  terminates in at most $\left\lceil \frac{64L^2}{\epsilon^2}\right\rceil \cdot \left\lceil 2\log(1/\gamma)\right\rceil$ iterations with probability at least $1 - \gamma$. 
\end{cor}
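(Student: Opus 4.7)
The plan is to combine \cref{thm:minnorm}, Markov's inequality, and a Markovian tail-amplification argument. Set $N := \lceil 64L^2/\epsilon^2 \rceil$ and $M := \lceil 2\log(1/\gamma)\rceil$, so the target bound reduces to $\Pr(\tau > NM) \leq \gamma$.

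First, I would establish a constant-probability one-phase bound. Applying \cref{thm:minnorm} at $k = N$ yields
\[
\EE\bigl[\|g_N\|_2^2\, \mathbf{1}_{\tau > N}\bigr] \;\leq\; \frac{16L^2}{16+N} \;\leq\; \frac{\epsilon^2}{4}.
\]
On the event $\{\tau > N\}$, the while-loop guard in \cref{alg:approx_desc} is still active, forcing $\|g_N\|_2 > \epsilon$. Hence $\|g_N\|_2^2 \mathbf{1}_{\tau > N} \geq \epsilon^2 \mathbf{1}_{\tau > N}$, and Markov's inequality gives $\Pr(\tau > N) \leq 1/4$.

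Second, I would amplify this to probability $1 - \gamma$ by exploiting the Markov structure of the iterates. Conditionally on $g_N$ and on the event $\{\tau > N\}$, the future sequence $g_{N+1}, g_{N+2}, \ldots$ has the same law as a fresh run of $\mathtt{MinNorm}(x)$ whose initial vector is $g_N$ rather than $\nabla f(\zeta_0)$; the transition from $g_k$ to $g_{k+1}$ depends only on the current iterate and the fixed inputs $(x, \delta, \epsilon)$. The bound in \cref{thm:minnorm} uses only the estimate $\|g_0\|_2 \leq L$, and this also holds for $g_N$ since every iterate $g_k$ is a convex combination of gradient samples, each of Euclidean norm at most $L$. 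Consequently the one-phase bound transfers, yielding $\Pr(\tau > (j+1)N \mid \tau > jN) \leq 1/4$ for every $j \geq 0$, and induction gives $\Pr(\tau > jN) \leq 4^{-j}$.

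Finally, setting $j = M$ produces $\Pr(\tau > NM) \leq 4^{-M} \leq \gamma$, where the last inequality uses $M \geq 2\log(1/\gamma)$ together with $2\log 4 \geq 1$ (valid for every standard logarithm base). The only subtle point in this plan is the Markovian restart: one must verify by inspection of the proof of \cref{thm:minnorm} that $g_0$ enters the argument solely through the initial estimate $\|g_0\|_2 \leq L$, after which the amplification is a routine composition of Markov's inequality and iterated conditioning.
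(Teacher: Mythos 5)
Your proof is correct and follows essentially the same route as the paper's: a single-phase tail bound obtained from \cref{thm:minnorm} together with the while-loop guard and Markov's inequality, then amplified by iterated conditioning over blocks of $N = \lceil 64L^2/\epsilon^2\rceil$ steps. The one place you go beyond the paper is in spelling out why $\Pr(\tau > (j+1)N \mid \tau > jN) \leq 1/4$ holds---via the Markovian restart and the observation that the proof of \cref{thm:minnorm} uses $g_0$ only through $\|g_0\|_2\leq L$, a property every iterate inherits as a convex combination of subgradients---whereas the paper dismisses this step with a ``Similarly,'' so your elaboration is a legitimate gap-fill.
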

\begin{proof} 
Notice that when $k \geq \frac{64L^2}{\varepsilon^2}$, we have, by \cref{thm:minnorm}, that 
$$
\textrm{Pr}(\tau > k) \leq \textrm{Pr}(\|g_k\|_21_{\tau > k} \geq \epsilon) \leq \frac{16L^2}{(16 + k)\varepsilon^2} \leq \frac{1}{4}.
$$
Similarly, for all $i \in \NN$, we have $\textrm{Pr}(\tau > ik \mid \tau > (i-1)k) \leq 1/4$. Therefore, 
\begin{align*}
\textrm{Pr}(\tau > ik) &= \textrm{Pr}(\tau > ik \mid \tau > (i-1)k)\textrm{Pr}(\tau > (i-1)k) \leq \frac{1}{4}\textrm{Pr}(\tau > (i-1)k) \leq \frac{1}{4^i}.
\end{align*}
Consequently, we have $\textrm{Pr}(\tau > ik) \leq \frac{1}{4^i} \leq \gamma$ 
whenever $i \geq \log(1/\gamma)/\log(4)$, as desired.
\end{proof}

Combining \cref{alg:approx_desc} with \eqref{eq:conceptualDescentAlg} yields Algorithm~\ref{alg:INGD}, with convergence guarantees summarized in \cref{thm:finalZhangSimplified}, whose proof is identical to that of  \cite[Theorem 8]{pmlr-v119-zhang20p}.

\begin{algorithm}[!ht]
\DontPrintSemicolon
\caption{Interpolated Normalized Gradient Descent ($\mathtt{INGD}(x_0,T))$}\label[alg]{alg:INGD}
\KwIn{Initial $x_0$, counter $T$}

\For{$t=0,\ldots, T-1$}{

$\qquad\;  g=\mathtt{MinNorm}(x_t)$ \tcp*{Computational complexity $\widetilde{\mathcal{O}}(L^2/\epsilon^2)$} \; 

$\qquad$ Set $x_{t+1}=x_t-\delta\hat g$\; 

}

\textbf{Return} $x_{T}$\; 

\end{algorithm}

\begin{thm}\label[thm]{thm:finalZhangSimplified}
Fix an initial point $x_0\in \R^d$, and define $\Delta=f(x_0)-\inf_x f(x)$.
Set the number of iterations $T=\frac{4\Delta}{\delta\epsilon}$. Then, with probability $1-\gamma$, the point $x_T=\mathtt{INGD}(x_0,T)$ satisfies $\dist(0,\partial_{\delta}f(x_T))\leq \epsilon$ in a total of at most  
$$\left\lceil\frac{4\Delta}{\delta\epsilon} \right\rceil \cdot \left\lceil \frac{64L^2 }{\epsilon^2}\right\rceil \cdot \left\lceil 2\log\left(\frac{4\Delta}{\gamma\delta\epsilon}\right)\right\rceil\qquad \text{ function-value and gradient evaluations}.$$
\end{thm}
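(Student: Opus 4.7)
The plan is to combine the per-call guarantee of $\mathtt{MinNorm}$ (\Cref{cor:MinNorm}) with a one-line potential-function argument on $f$, and then handle the randomness via a union bound over the outer loop.

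First I would examine what happens on exit from a single call to $\mathtt{MinNorm}(x_t)$. Its while loop terminates only when either $\|g\|_2 \leq \epsilon$ or the approximate descent test $\tfrac{\delta}{4}\|g\|_2 \geq f(x_t) - f(x_t - \delta \hat g)$ fails. Before using this dichotomy, I would verify by induction on the inner counter $k$ that the returned $g$ lies in $\partial_{\delta} f(x_t)$: the initialization $g_0 = \nabla f(\zeta_0)$ sits in $\partial f(\zeta_0) \subset \partial_{\delta} f(x_t)$ since $\zeta_0 \in \mathbb{B}_\delta(x_t)$, and each update $g_{k+1} \in [g_k,\nabla f(y_k)]$ remains in $\partial_{\delta} f(x_t)$ by convexity of the Goldstein subdifferential, because the sampled $y_k$ lies in the segment $[x_t, x_t - \delta \hat\zeta_k]$ and hence within distance $\delta$ of $x_t$. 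Consequently, if the exit is triggered by $\|g\|_2 \leq \epsilon$, the current $x_t$ is already $(\delta,\epsilon)$-stationary; otherwise the descent test succeeded and $f(x_{t+1}) = f(x_t - \delta \hat g) < f(x_t) - \tfrac{\delta}{4}\|g\|_2 < f(x_t) - \tfrac{\delta\epsilon}{4}$.

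Next I would run the potential argument. If not a single outer iteration produced a vector with $\|g\|_2 \leq \epsilon$, then the strict-descent case applies at every step, so telescoping over $T = \lceil 4\Delta/(\delta\epsilon)\rceil$ iterations gives $f(x_0) - f(x_T) > T \cdot \tfrac{\delta\epsilon}{4} \geq \Delta$, contradicting $f(x_T)\geq \inf f$. Hence some $x_t$ with $t\leq T$ must be $(\delta,\epsilon)$-stationary, and I interpret $\mathtt{INGD}$ as halting and returning that iterate, with $T$ playing the role of a budget rather than an exact step count.

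Finally I would account for the randomness. Invoking \Cref{cor:MinNorm} with per-call failure probability $\gamma/T$ bounds each $\mathtt{MinNorm}$ call by $\lceil 64L^2/\epsilon^2\rceil \cdot \lceil 2\log(T/\gamma)\rceil$ gradient and function evaluations; a union bound across the $T$ outer iterations yields overall success probability at least $1-\gamma$. Multiplying the three factors and substituting $T = \lceil 4\Delta/(\delta\epsilon)\rceil$ recovers the stated complexity. The only step that requires genuine care is this bookkeeping: one must ensure that the event in which the deterministic potential argument applies is precisely the intersection of the success events for the individual $\mathtt{MinNorm}$ calls, which is exactly what the union bound guarantees. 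Everything else is routine.
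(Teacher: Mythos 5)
Your argument is correct and takes exactly the route the paper intends (the paper defers this proof to Theorem~8 of Zhang et al.\ rather than spelling it out): the dichotomy at the exit of $\mathtt{MinNorm}$, the telescoping potential bound $f(x_t)-f(x_{t+1})>\delta\epsilon/4$ forcing termination within $\lceil 4\Delta/(\delta\epsilon)\rceil$ outer steps, and a union bound with per-call failure probability $\gamma/T$ which, plugged into \cref{cor:MinNorm}, reproduces the stated product of three factors. You are also right to note that \cref{alg:INGD} as written has no early-stop check, so $T$ should be read as a budget with the algorithm returning the first iterate $x_t$ for which $\mathtt{MinNorm}$ reports $\|g\|_2\leq\epsilon$ — that is the reading under which the theorem's conclusion about $x_T$ is literally true.
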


In summary, the complexity of finding a point $x$ satisfying $\dist(0,\partial_{\delta}f(x))\leq \epsilon$ is at most $\mathcal{O}\left(\frac{\Delta L^2 }{\delta\epsilon^3}\log\left(\frac{4\Delta}{\gamma\delta\epsilon}\right)\right)$ with probability $1-\gamma$. Using the identity $\partial f(x) = \limsup_{\delta \rightarrow 0} \partial_{\delta} f(x)$, this result also provides a strategy for finding a Clarke stationary point, albeit with no complexity guarantee. 
It is thus natural to ask whether one may efficiently find some point $x$ for which there exists $y\in \mathbb{B}_{\delta}(x)$ satisfying $\dist(0,\partial f(y))\leq \epsilon$. This is exactly the guarantee of subgradient methods on weakly convex functions in~\citep{davis2019stochastic}. \ifdefined\iscolt\citet{shamir2020can}\else\citep{shamir2020can} \fi  shows that for general Lipschitz functions, the number of subgradient computations required to achieve this goal by any algorithm scales with the dimension of the ambient space.
Finally, we mention that the perturbation technique similarly applies to the stochastic algorithm of~\cite[Algorithm 2]{pmlr-v119-zhang20p}, yielding a method that matches their complexity estimate.

\section{Faster INGD in Low Dimensions}\label[sec]{sec:low-dimension}
\global\long\def\y{\mathbf{y}}%
\global\long\def\x{\mathbf{x}}%
\global\long\def\d{\mathbf{d}}%
\global\long\def\g{\mathbf{g}}%
\global\long\def\u{\mathbf{u}}%
\global\long\def\z{\mathbf{z}}%
\global\long\def\mt{\mathbf{m}_{t}}%
\global\long\def\m{\mathbf{m}}%

\global\long\def\Rn{\mathbb{R}^{n}}%
\global\long\def\R{\mathbb{R}}%
\global\long\def\inprod#1#2{\langle#1,#2\rangle}%
\global\long\def\E{\mathbb{E}}%
\global\long\def\norm#1{\|#1\|}%

\global\long\def\xtk{\mathbf{x}_{t,k}}%
\global\long\def\xt{\mathbf{x}_{t}}%
\global\long\def\mtk{\mathbf{m}_{t,k}}%
\global\long\def\xtp{\x_{t+1}}%
\global\long\def\xtpl{\x_{t}^{+}}%
\global\long\def\yt{\y_{t}}%
\global\long\def\mtkp{\m_{t,k+1}}%

\global\long\def\ball{\mathcal{B}}%
\global\long\def\halfspace{\mathcal{H}}%
\global\long\def\A{\mathcal{A}}%
\global\long\def\dist{\text{dist}}%
\global\long\def\tr{\mathrm{Tr}}%
\global\long\def\fdir#1#2{f^{\circ}(#1;#2)}%
\global\long\def\ft{f_{t}}%

\global\long\def\gs{\g^{\star}}%
\global\long\def\T{\mathcal{T}}%
\global\long\def\ut{\mathbf{u}_{t}}%
\global\long\def\v{\mathbf{v}}%
\global\long\def\w{\mathbf{u}_{t+1}}%
\global\long\def\hg{\widehat{\g}}%
\global\long\def\vs{\v^{\star}}%
\global\long\def\O{\mathcal{O}}%

\global\long\def\fs{\bar{f}}%
\global\long\def\ts{t^{\star}}%
\global\long\def\gph{G}%
 
\global\long\def\dd{\mathrm{d}}%
 
\global\long\def\dist{\operatorname{dist}}%
\global\long\def\eps{\epsilon}%
\global\long\def\fse{\bar{f}_{\epsilon}}%
\global\long\def\fbp{\overline{f}^{\prime}}%
\global\long\def\conv{\operatorname{conv}}%
\global\long\def\oracle{\mathscr{O}}%
\global\long\def\vol{\mathrm{vol}}%
In this section, we describe our modification of \cref{alg:approx_desc} for obtaining improved runtimes in the low-dimensional setting. Our modified algorithm hinges on computations similar to \eqref{eqn:approx_desce}, \eqref{eqn:fund_thm_smooth}, and \eqref{eqn:fund_thm_smooth2} except for the constants involved, and hence we explicitly state this setup. Given a vector $g\in \partial_{\delta} f(x)$, we say
it satisfies the \emph{descent condition }at $x$ if 
\begin{equation}
f(x-\delta\hat{g})\leq f(x)-\frac{\delta\epsilon}{3}.\label[eq]{eq:descent-condition}
\end{equation}
Recall that \cref{lem:generic_fubini} shows that for almost all $g$, we have 
\begin{align*}
f(x)-f(x-\delta\hat{g}) & =\int_{0}^{1}\langle\nabla f(x-t\delta\hat{g}),\hat{g})\ dt =\delta\cdot\E_{z\sim\text{Unif}[x-\delta\hat{g},x]}\langle\nabla f(z),\hat{g}\rangle.
\end{align*}Hence, when $g$ does \emph{not} satisfy the descent condition~\eqref{eq:descent-condition}, we can output a random vector $u\in\partial_{\delta}f(x)$ such that
\begin{equation}
\mathbb{E}\langle u,g\rangle\leq\frac{\epsilon}{3}\|g\|_2.\label[ineq]{eq:expect_ug}
\end{equation}
Then, an arbitrary vector $g$ either satisfies
 \eqref{eq:descent-condition} or can be used to output a random vector $u$ satisfying
\eqref{eq:expect_ug}. As described in \cref{cor:MinNorm},  \cref{alg:approx_desc} achieves this goal in $\widetilde{\mathcal{O}}(L^{2}/\epsilon^{2})$ iterations. 

In this section, we improve upon this oracle complexity by applying cutting plane methods to design~\cref{alg:approx-min-norm}, which finds a better descent direction in $\widetilde{\mathcal{O}}(Ld/\epsilon)$ oracle calls for $L$-Lipschitz functions and $\mathcal{O}(d \log(L/\epsilon)\log (\delta \rho/\epsilon))$ oracle calls for $\rho$-weakly convex  functions. In \cref{sec:oracle}, we demonstrate
how to remove the expectation in \eqref{eq:expect_ug} and turn the
inequality into a high probability statement. For now, we
assume the existence of an oracle $\mathscr{O}$ as in \cref{def-innerprod-oracle}. 
\begin{definition}[Inner Product Oracle]\label[def]{def-innerprod-oracle}
Given a vector $g\in \partial_{\delta}f(x)$ that does not satisfy the descent condition
\eqref{eq:descent-condition}, the inner product oracle $\oracle(g)$ outputs
a vector $u\in\partial_{\delta}f(x)$ such that 
\[
\inprod ug \leq\frac{\epsilon}{2}\|g\|_2.
\]
\end{definition}
We defer the proof of the lemma below to \cref{sec:oracle}.
\begin{lem}
\label[lem]{lem:inner-product-oracle}
Fix  $x\in \R^d$ and a unit vector $\hat g\in \RR^d$ such that $f$ is differentiable almost everywhere on the line segment $[x,y]$, where 
$y\defeq x-\delta\hat g$. Suppose that $z\in \R^d$ sampled uniformly from $[x,y]$ satisfies $\mathbb{E}_{z}\langle\nabla f(z),\hat{g}\rangle\leq\frac{\epsilon}{3}$.
Then we can find
$\bar{z}\in \R^d$ using at most $O(\frac{L}{\epsilon}\log(1/\gamma))$ gradient evaluations of $f$, such that with probability at least $1-\gamma$ the estimate $\langle\nabla f(\bar{z}),\hat{g}\rangle\leq\frac{\epsilon}{2}$
holds. Moreover, if $f$ is $\rho$-weakly convex, we can find $\bar{z}\in \R^d$
such that $\langle\nabla f(\bar{z}),\hat{g}\rangle\leq\frac{\epsilon}{2}$
using only $O(\log(\delta \rho/\epsilon))$ function evaluations of $f$.
\end{lem}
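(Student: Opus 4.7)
\textbf{Proof plan for Lemma~\ref{lem:inner-product-oracle}.}

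The plan is to parametrize the segment $[x, y]$ via $z(t) = x - t\delta\hat{g}$ for $t \in [0, 1]$, let $\phi(t) = f(z(t))$, and set $h(t) = \langle \nabla f(z(t)), \hat{g}\rangle$ for $t$ in the full-measure set where $\phi$ is differentiable; the hypothesis then reads $\int_0^1 h(t)\,dt \leq \epsilon/3$, and the goal is to exhibit $\bar{t}$ with $h(\bar{t}) \leq \epsilon/2$. I handle the two regimes separately: random sampling for general Lipschitz $f$, and bisection exploiting a monotonicity structure for weakly convex $f$.

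For the general Lipschitz case, an averaging argument shows that the bad set $\{t \in [0,1] : h(t) > \epsilon/2\}$ has Lebesgue measure at most $(L + \epsilon/3)/(L + \epsilon/2)$: if this measure is $p$, then since $h \in [-L, L]$ we have $\epsilon/3 \geq \int_0^1 h \geq p(\epsilon/2) - (1-p)L$, which rearranges to the stated bound. Consequently the good set has measure at least $\Omega(\epsilon/L)$, so a single uniform sample from $[x, y]$ satisfies $\langle \nabla f(\cdot), \hat{g}\rangle \leq \epsilon/2$ with probability $\Omega(\epsilon/L)$. Drawing $N = \Theta((L/\epsilon)\log(1/\gamma))$ i.i.d.\ uniform samples, evaluating $\langle \nabla f(\cdot), \hat{g}\rangle$ at each, and returning the minimizer succeeds with probability at least $1 - \gamma$, using $N$ gradient evaluations in total.

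For the weakly convex case, $\rho$-weak convexity of $f$ implies $\phi(t) + (\rho\delta^2/2)t^2$ is convex in $t$, hence $h(t) - \rho\delta t$ is nonincreasing on $\dom(\phi')$. I will run a bisection on $[a, b] \subseteq [0, 1]$ that preserves the invariant $\bar{h}_{[a,b]} \defeq (\phi(a) - \phi(b))/(\delta(b - a)) \leq \epsilon/3$: at each step I compute $\phi$ at the midpoint $m = (a+b)/2$ (one function evaluation) and keep whichever of $[a, m]$ and $[m, b]$ has average $\leq \epsilon/3$, at least one of which exists by averaging. After $O(\log(\rho\delta/\epsilon))$ rounds the length shrinks to $(b - a) \leq \epsilon/(6\rho\delta)$. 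Averaging the monotonicity inequality $h(\bar{t}) - \rho\delta\bar{t} \leq h(s) - \rho\delta s$ over $s \in [a, \bar{t}]$ then gives, for any $\bar{t} \in (a, b] \cap \dom(\phi')$, the key estimate
\begin{equation*}
h(\bar{t}) \;\leq\; \bar{h}_{[a, \bar{t}]} + \rho\delta(\bar{t} - a)/2.
\end{equation*}
Picking $\bar{t}$ uniformly at random in a window just below $b$ of width $\ll \epsilon(b-a)/L$ places $\bar{t}$ in $\dom(\phi')$ almost surely and makes $\bar{h}_{[a, \bar{t}]}$ lie within $\epsilon/50$ of $\bar{h}_{[a, b]}$, so the inequality above yields $h(\bar{t}) \leq \epsilon/3 + \epsilon/50 + \epsilon/12 < \epsilon/2$; the total cost is $O(\log(\rho\delta/\epsilon))$ function evaluations, with no gradient evaluations needed. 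The main technical subtlety is that the deterministic dyadic endpoint $b$ need not lie in $\dom(\phi')$, which is precisely why the small random perturbation is used; this is legitimate because $\dom(\nabla f) \cap [x, y]$ has full one-dimensional measure by the lemma's hypothesis.
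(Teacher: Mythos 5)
Your proposal is essentially the paper's proof, reorganized. Both cases use the same mechanism: for general Lipschitz $f$, a Markov-type argument shows the "good" set $\{t : h(t)\le\epsilon/2\}$ has measure $\Omega(\epsilon/L)$, so $O((L/\epsilon)\log(1/\gamma))$ i.i.d.\ samples suffice; for $\rho$-weakly convex $f$, a bisection driven by function-value differences shrinks the interval while keeping the average of $h$ small, and weak convexity controls the oscillation of $h$ on the final short interval. Your Lipschitz-case derivation differs from the paper's only cosmetically (bounding the measure of the bad set directly versus the paper's conditional-expectation manipulation), and your bisection invariant $\bar h_{[a,b]}\le\epsilon/3$ is exactly the paper's "$\E_{t\in[a,b]}h(t)\le\epsilon/3$".

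There are, however, two places where your treatment of the weakly convex case is more careful than the paper's, and both are worth flagging. First, you correctly observe that $h(t)-\rho\delta t$ is nonincreasing, which means one can only upper-bound $h(\bar t)$ by averaging over $s\le\bar t$; this forces $\bar t$ to be near the right endpoint $b$. The paper's Algorithm~\ref{alg:binary_search} returns $x-a\delta\hat g$ (the left endpoint), and its displayed inequality $h(\bar t)-\E h(t)\le\E_{t\in[a,b]}\frac{\bar t - t}{\delta}\rho\|y-x\|^2$ only holds when $\bar t\ge t$; with $\bar t=a$ the sign runs the other way, so the paper's argument as written does not close. Your version fixes this. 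Second, you address the issue that the deterministic dyadic point $b$ (or $a$) need not lie in $\dom(\nabla f)$ by perturbing $\bar t$ into a narrow window where the averaging error in $\bar h_{[a,\bar t]}$ is $O(\eta L/(b-a))$; the paper does not address differentiability of $f$ at its returned point. Your numeric budget $\epsilon/3+\epsilon/50+\epsilon/12<\epsilon/2$ checks out. In short: same route, but you have patched a sign inconsistency and a measurability gap that the paper leaves implicit.
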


Our key insight is that this oracle is almost identical to the
gradient oracle of the minimal norm element problem
\[
\min_{g\in\partial_{\delta}f(x)}\|g\|_2.
\]
Therefore, we can use it in the cutting plane method to find an approximate
minimal norm element of $\partial_{\delta}f$. When there is no element
of $\partial_{\delta}f$ with norm less than $\epsilon$, our algorithm
will instead find a vector that satisfies the descent condition. The main result of this section is the following theorem. 
\begin{thm}\label[thm]{thm:main-theorem}Let $f:\R^{d}\to\R$ be an $L$-Lipschitz
   function. Fix an initial point $x_{0}\in\R^{d}$,
   and let $\Delta\defeq f(x_{0})-\inf_{x}f(x)$. Then, there exists
   an algorithm that outputs a point $x\in\R^{d}$ satisfying $\dist(0,\partial_{\delta}f(x))\leq\epsilon$ and,
    with probability at least $1-\gamma$,  uses at most
   $$\mathcal O\left(\frac{\Delta Ld}{\delta\eps^{2}}\cdot\log(L/\epsilon)\cdot\log(1/\gamma)\right)\qquad \text{ function value/gradient evaluations.}$$  If $f$ is
   $\rho$-weakly convex, the analogous statement holds with probability one and with the improved efficiency estimate $\mathcal{O}\text{\ensuremath{\left(\frac{\Delta d}{\delta\epsilon}\log(L/\epsilon)\cdot\log(\delta \rho/\epsilon)\right)}}$ of
    function value/gradient evaluations.
   \end{thm}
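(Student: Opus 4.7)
The plan is to nest a Goldstein-style outer loop around an inner cutting-plane scheme that locates an approximate minimum-norm element of $\partial_{\delta}f(x)$ using only $\widetilde{\mathcal O}(d)$ oracle queries. Multiplying the outer iteration count, the inner cutting-plane iteration count, and the per-query cost supplied by \cref{lem:inner-product-oracle} will produce both claimed complexities.

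In the outer loop, starting from $x_{0}$, each iteration invokes \cref{alg:approx-min-norm} at the current $x_{t}$ and returns one of two outcomes: (i) a unit direction $v_{t}$ for which the descent condition $f(x_{t}-\delta v_{t})\le f(x_{t})-\delta\epsilon/3$ holds, upon which we set $x_{t+1}:=x_{t}-\delta v_{t}$; or (ii) an explicit $g\in\partial_{\delta}f(x_{t})$ with $\|g\|_{2}\le\epsilon$, which certifies $(\delta,\epsilon)$-stationarity and halts the procedure. Since $f$ is bounded below by $f(x_{0})-\Delta$, outcome (i) can occur at most $3\Delta/(\delta\epsilon)=O(\Delta/(\delta\epsilon))$ times.

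In the inner loop at a fixed $x$, set $K:=\partial_{\delta}f(x)$ and exploit the duality $\dist(0,K)=\max_{\|v\|_{2}\le 1}h(v)$, where $h(v):=\inf_{u\in K}\langle v,u\rangle$ is concave. We apply a modern cutting-plane method (e.g.\ Vaidya or Lee--Sidford--Wong) to the convex body
\[
C_{t}\defeq\bigl\{v\in\R^{d}:\|v\|_{2}\le 1,\ \langle u_{s},v\rangle\ge \epsilon\text{ for }s=1,\ldots,t-1\bigr\},
\]
initialized to the unit ball. At step $t$ we query the center $v_{t}$ of $C_{t}$ as an input direction to the oracle of \cref{def-innerprod-oracle}; either $v_{t}$ satisfies the descent condition and we return it for outcome~(i), or the oracle (implemented via \cref{lem:inner-product-oracle}) produces $u_{t}\in K$ with $\langle u_{t},v_{t}\rangle\le\epsilon/2$ and we set $C_{t+1}=C_{t}\cap\{v:\langle u_{t},v\rangle\ge \epsilon\}$. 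This cut is valid since $h(v)\le\langle u_{t},v\rangle$ preserves every witness $v$ with $h(v)\ge\epsilon$, and it is ``deep'' in that it excises a slab of width at least $\epsilon/(2L)$ around $v_{t}$ (using $\|u_{t}\|_{2}\le L$). With a dimension-free volume-shrinkage rate, $T_{\mathrm{in}}=O(d\log(L/\epsilon))$ iterations either produce a descent direction or reduce the volume of $C_{t}$ below $(\epsilon/L)^{d}$, at which point the minimax identity
\[
\min\Bigl\{\bigl\|\textstyle\sum_{s}\lambda_{s}u_{s}\bigr\|_{2}:\lambda\ge 0,\ \sum_{s}\lambda_{s}=1\Bigr\}\;=\;\max_{\|v\|_{2}\le 1}\min_{s}\langle u_{s},v\rangle\;\le\;\epsilon
\]
lets us recover $g=\sum_{s}\lambda_{s}u_{s}\in K$ with $\|g\|_{2}\le\epsilon$ via a small quadratic solve over simplex weights, delivering outcome~(ii).

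For the cost tally, \cref{lem:inner-product-oracle} provides $O((L/\epsilon)\log(1/\gamma'))$ function/gradient evaluations per oracle call in the general Lipschitz case and $O(\log(\delta\rho/\epsilon))$ deterministic evaluations per call in the $\rho$-weakly convex case. Multiplying $O(\Delta/(\delta\epsilon))\cdot O(d\log(L/\epsilon))$ by the per-query cost, and choosing $\gamma':=\gamma/(T_{\mathrm{out}}T_{\mathrm{in}})$ so that a union bound over all oracle calls succeeds with probability at least $1-\gamma$, yields the announced general Lipschitz bound; the weakly convex case is deterministic, giving probability one. The main obstacle is the inner cutting-plane analysis: one must invoke a cutting-plane method whose per-query volume shrinkage is dimension-free, and calibrate the gap between $\epsilon$ and $\epsilon/2$ in the oracle with the slack implicit in shrinking volume rather than literally emptying $C_{t}$, so that on termination the two outcomes cleanly partition. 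The outer descent count, LP-duality extraction, and union-bound bookkeeping are then routine given \cref{lem:inner-product-oracle} and \cref{thm:gold_decr}.
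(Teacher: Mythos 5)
Your plan follows the same route as the paper's \cref{sec:low-dimension}: an outer Goldstein descent loop with $O(\Delta/(\delta\epsilon))$ steps, an inner cutting-plane scheme that treats the inner-product oracle as a separation oracle for the dual problem $\max_{\|v\|_2\le 1}\phi_Q(v)$, and a cost tally via \cref{lem:inner-product-oracle}. Two gaps remain, both of which the paper handles explicitly. First, you query the exact centroid $v_t$, but \cref{lem:inner-product-oracle} requires $f$ to be differentiable almost everywhere along the segment $[x,\,x-\delta v_t/\|v_t\|_2]$, which a deterministically chosen direction can fail; this is precisely why \cref{alg:approx-min-norm} samples a perturbed point $\zeta_k\sim\mathbb{B}_r(v_k)$ (relying on \cref{lem:generic_fubini}) and then invokes the robust Grünbaum bound of \cref{thm:Grunbaum-perturbation} together with \cref{thm:circumscribed-ellipsoid} to retain the constant-factor volume decrease for a cut that only passes near the centroid.

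Second, your core step --- that $\mathrm{vol}(C_T)<(\epsilon/L)^d$ implies $\max_{\|v\|_2\le 1}\min_s\langle u_s,v\rangle\le\epsilon$ and hence $\min_{g\in\operatorname{conv}\{u_s\}}\|g\|_2\le\epsilon$ --- is exactly the content of \cref{thm:minimal-norm-CPM} and is not immediate: a small volume of $\{v\in\mathbb{B}:\min_s\langle u_s,v\rangle\ge\epsilon\}$ only controls how far the max can exceed $\epsilon$, via the $L$-Lipschitzness of $v\mapsto\min_s\langle u_s,v\rangle$, and the constants have to be tracked together with boundary effects near $\|v\|_2=1$. The paper closes this by contradiction: if $\|g^*_{Q_{T+1}}\|_2\ge\epsilon$, then \cref{lem:separationOracle} places $v^*_{Q_{T+1}}\in\Omega_{T+1}$; initializing $\Omega_0=\mathbb{B}_2(0)$ keeps a full ball of slack around the unit vector $v^*$, so small volume produces $\tilde v\in\mathbb{B}_{\epsilon/(2L)}(v^*_{Q_{T+1}})\setminus\Omega_{T+1}$, and combining the violated cut inequality with $\|u_i\|_2\le L$ and \cref{lem:vstar} yields $\|g^*_{Q_{T+1}}\|_2<\epsilon$, a contradiction. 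You flagged the ``$\epsilon$ vs.\ $\epsilon/2$'' calibration and the fact that the volume never literally vanishes as the main obstacle --- correctly so; the paper's proof of \cref{thm:minimal-norm-CPM} is where that obstacle is actually resolved, and your sketch does not yet contain the argument.
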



\subsection{Finding a Minimal Norm Element}\label[sec]{sec:MinNorm}

In this section, we show, via \cref{alg:approx-min-norm}, how to find an approximate minimal norm element of $\partial_{\delta} f(x)$. Instead
of directly working with the minimal norm problem, we note that, by
Cauchy-Schwarz inequality and the Minimax Theorem, for any closed convex
set $Q$, we have
\begin{equation}
\min_{g\in Q}\|g\|_{2}=\min_{g\in Q}\left[\max_{\|v\|_2\le1}\inprod gv\right]=\max_{\|v\|_2\le1}\left[\min_{g\in Q}\inprod gv\right]=\max_{\|v\|_2\leq1}\phi_{Q}(v),\label[eq]{minimaxMagic}
\end{equation}
where $\phi_{Q}(v)\defeq\min_{g\in Q}\inprod gv$, and \cref{lem:vstar} formally connects the problem of finding the minimal norm element with that
of maximizing $\phi_Q$. 
The key observation  in this section
(Lemma~\ref{lem:separationOracle}) is that the inner
product oracle $\oracle$ is a separation oracle for the (dual) problem $\max_{\|v\|_2\leq1}\phi_{Q}(v)$ with $Q=\partial_{\delta}f(x)$ and hence can be used in cutting
plane methods.
\begin{lem}
\label[lem]{lem:vstar} Let $Q\subset\RR^d$ be a closed convex set that does not contain the origin. Let $g_{Q}^{*}$ be a minimizer of $\min_{g\in Q}\|g\|_{2}$.
Then, the vector $v_{Q}^{*}=g_{Q}^{*}/\|g_{Q}^{*}\|_{2}$ satisfies 
\[
\langle v_{Q}^{*}, g\rangle\geq\|g_{Q}^{*}\|_{2}\qquad\text{ for all }g\in Q.
\]
and $v_{Q}^{*}=\arg\max_{\|v\|_{2}\leq1}\phi_{Q}(v).$
\end{lem}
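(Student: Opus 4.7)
The plan is to observe that $g_Q^*$ is precisely the projection of the origin onto the closed convex set $Q$, so the first inclusion follows from the standard first-order optimality characterization of the projection, and the second claim then drops out of the minimax identity \eqref{minimaxMagic} quoted just above the lemma. Since $Q$ is closed, convex, and does not contain $0$, the minimizer $g_Q^*$ is uniquely defined (the function $g\mapsto\tfrac{1}{2}\|g\|_2^2$ is strictly convex) and in particular nonzero, so $v_Q^*=g_Q^*/\|g_Q^*\|_2$ is well-defined.

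For the first assertion, I would invoke the variational inequality for the projection $g_Q^* = \mathrm{Proj}_Q(0)$, namely $\langle 0 - g_Q^*, g - g_Q^*\rangle \le 0$ for every $g \in Q$. Rearranging gives $\langle g_Q^*, g\rangle \ge \|g_Q^*\|_2^2$ for all $g \in Q$, and dividing through by $\|g_Q^*\|_2>0$ yields $\langle v_Q^*, g\rangle \ge \|g_Q^*\|_2$, as claimed. (Alternatively one could cite this as an instance of the supporting hyperplane property of the nearest-point map.)

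For the second assertion, the first part of the lemma immediately implies $\phi_Q(v_Q^*) = \min_{g\in Q}\langle v_Q^*,g\rangle \ge \|g_Q^*\|_2$. On the other hand, the minimax identity \eqref{minimaxMagic} asserts $\max_{\|v\|_2\le 1}\phi_Q(v) = \min_{g\in Q}\|g\|_2 = \|g_Q^*\|_2$, and since $\|v_Q^*\|_2 = 1$ the vector $v_Q^*$ is feasible for the maximization, so $\phi_Q(v_Q^*) \le \|g_Q^*\|_2$. Combining the two inequalities gives equality, proving that $v_Q^*$ attains the maximum. There is no real obstacle here: the only subtlety is ensuring the minimax swap in \eqref{minimaxMagic} is justified, but this is standard (both sets $Q$ and the unit ball are convex, the unit ball is compact, and the bilinear objective $\langle g,v\rangle$ is continuous), and in any case the excerpt invokes \eqref{minimaxMagic} as given.
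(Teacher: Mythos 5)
Your proof is correct and follows essentially the same route as the paper: both deduce $\langle g_Q^*, g\rangle \ge \|g_Q^*\|_2^2$ from optimality of $g_Q^*$ (you frame it as the projection variational inequality, the paper states it directly from minimality), and both then combine this with the minimax identity \eqref{minimaxMagic} to identify $v_Q^*$ as the maximizer of $\phi_Q$. The only cosmetic difference is that you split the final step into matching $\ge$ and $\le$ bounds while the paper writes one chain of equalities.
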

\begin{proof}
We omit the subscript $Q$ to simplify notation. Since, by definition, $g^*$ minimizes $\|g\|_2$ over all $g\in Q$, we have 
\[
\langle g^{*}, g \rangle \geq\|g^{*}\|_{2}^{2}\text{ for all }g\in Q,
\]
and the inequality is tight for $g=g^{*}.$ Using this fact and $\phi(v^{*})=\min_{g\in Q} \langle g, \frac{g^{*}}{\|g^{*}\|_2} \rangle$ gives
\[
\phi(v^*) =\|g^{*}\|_{2}=\min_{g\in Q}\|g\|_{2}=\min_{g\in Q}\max_{v:\|v\|_{2}\leq1} \inprod gv =\max_{\|v\|_{2}\leq1}\min_{g\in Q}\inprod gv=\max_{v:\|v\|_{2}\leq1}\phi(v),
\]
where we used Sion's minimax theorem in the second to last step. This completes
the proof. 
\end{proof}
Using this lemma, we can show that $\oracle$ is a separation
oracle.
\begin{lem}
\label[lem]{lem:separationOracle} Consider a vector $g\in \partial f_{\delta}(x)$ that does not satisfy the descent condition \eqref{eq:descent-condition}, and let the output of querying the oracle at $g$ be 
$u\in \oracle( g)$. Suppose that $\dist(0,\partial_{\delta} f(x))\geq\frac{\epsilon}{2}$.
Let $g^*$ be the minimal-norm element of $\partial_{\delta} f(x)$. Then the normalized vector $v^*\defeq g^*/\|g^*\|_2$ satisfies the inclusion:
\[
v^{*}\in\left\{w\in\R^{d}:\langle u,\hat g-w\rangle\leq0\right\}.
\]
\end{lem}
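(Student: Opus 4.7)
The plan is to combine two facts: a direct upper bound on $\langle u, \hat g\rangle$ coming from the oracle definition, and a matching lower bound on $\langle u, v^*\rangle$ coming from \cref{lem:vstar} applied to the minimal norm element. Subtracting will give the claim.

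First I would unpack the hypothesis that $u \in \oracle(g)$. By \cref{def-innerprod-oracle}, since $g \in \partial_\delta f(x)$ does not satisfy the descent condition, the oracle returns a vector $u \in \partial_\delta f(x)$ with $\langle u, g\rangle \leq \tfrac{\epsilon}{2}\|g\|_2$. Dividing through by $\|g\|_2$ (which is positive because $g \in \partial_\delta f(x)$ and the set is assumed to be bounded away from zero) yields
\[
\langle u, \hat g\rangle \leq \frac{\epsilon}{2}.
\]

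Next I would lower bound $\langle u, v^*\rangle$. The assumption $\dist(0, \partial_\delta f(x)) \geq \epsilon/2$ implies that $Q \defeq \partial_\delta f(x)$ is a closed convex set not containing the origin, so \cref{lem:vstar} applies with $g^* = g_Q^*$ and $v^* = v_Q^*$. Since $u$ itself lies in $Q$, that lemma gives
\[
\langle v^*, u\rangle \;\geq\; \|g^*\|_2 \;=\; \dist(0, \partial_\delta f(x)) \;\geq\; \frac{\epsilon}{2}.
\]

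Combining the two inequalities yields $\langle u, \hat g\rangle \leq \epsilon/2 \leq \langle u, v^*\rangle$, i.e., $\langle u, \hat g - v^*\rangle \leq 0$, which is exactly the claimed inclusion. There is no serious obstacle here; the only subtlety worth flagging is the sign/normalization bookkeeping — making sure we apply the oracle inequality after normalizing $g$ to $\hat g$, and making sure we invoke \cref{lem:vstar} on $u$ (which is in $Q$) rather than on $g$ or $\hat g$. Everything else is a one-line algebraic combination.
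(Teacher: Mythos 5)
Your proof is correct and follows essentially the same route as the paper's: use the oracle guarantee to bound $\langle u,\hat g\rangle\leq\epsilon/2$, use \cref{lem:vstar} (with $u\in Q$) to bound $\langle u,v^*\rangle\geq\|g^*\|_2\geq\epsilon/2$, and subtract. The only difference is that you make explicit the normalization of $g$ and the justification that $\|g\|_2>0$, which the paper leaves implicit.
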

\begin{proof}
Set $Q=\partial_{\delta}f(x)$. By using $\langle u,\hat g\rangle\leq\frac{\epsilon}{2}$ (the guarantee of $\oracle$ per \cref{def-innerprod-oracle}) and $\langle u,v^* \rangle \geq \|g^*\|_2$ (from \cref{lem:vstar}), we have $\langle u, \hat g-v^* \rangle=\langle u,\hat g\rangle-\langle u,v^* \rangle\leq\frac{\epsilon}{2}-\|g^{*}\|_{2}\leq0$.  
\end{proof}

Thus Lemma~\ref{lem:separationOracle}
states that if $x$ is not a $(\delta,\frac{\epsilon}{2})$-stationary point of $f$, then 
the oracle $\oracle$ produces a halfspace $\mathcal{H}_{v}$ that
separates $\hat g$ from $v^{*}$.
Since $\oracle$ is a separation oracle, we can combine
it with any cutting plane method to find $v^{*}$. For concreteness,
we use the center of gravity method and display our algorithm in \cref{alg:approx-min-norm}. 
%
%
Note that in our algorithm, we use a point $\zeta_k$ close to the true center of gravity of $\Omega_k$, and therefore, we invoke a result about the \emph{perturbed} center of gravity method.
\begin{thm}[Theorem 3 of \cite{DBLP:journals/jacm/BertsimasV04}; see also \citep{grunbaum1960partitions}]\label[thm]{thm:Grunbaum-perturbation}
	Let $K$ be a convex set with center of gravity $\mu$ and covariance matrix $A$. For any halfspace $H$ that contains some point $x$ with $\|x-\mu\|_{A^{-1}}\leq t$, we have
	\[
		\vol(K\cap H)\leq(1-1/e+t)\vol(K).
	\]
\end{thm}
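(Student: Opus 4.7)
The plan is to reduce to Gr\"unbaum's classical theorem (the $t=0$ case of the inequality) via an affine change of coordinates, and then to control the additional volume swept out by shifting the cutting hyperplane from the centroid $\mu$ to the nearby point $x$ on the boundary of $H$.

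First, I would apply the affine map $z\mapsto A^{-1/2}(z-\mu)$, which preserves all volume ratios and turns the Mahalanobis norm $\|\cdot\|_{A^{-1}}$ into the Euclidean norm. After this reduction, we may assume $\mu=0$, $A=I$, and the hypothesis becomes $\|x\|_2\leq t$. Write $H=\{z:\langle z,u\rangle\leq c\}$ for a unit normal $u$ and $c\defeq\langle x,u\rangle$, so that Cauchy--Schwarz gives $|c|\leq t$.

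Next, let $H_0\defeq\{z:\langle z,u\rangle\leq 0\}$ be the parallel halfspace through the origin. Gr\"unbaum's classical theorem, applied to $K$ with centroid $0$, yields $\vol(K\cap H_0)\leq(1-1/e)\vol(K)$. If $c\leq 0$, then $H\subseteq H_0$ and the conclusion is immediate. Otherwise $H\supseteq H_0$ and we decompose
\begin{equation*}
\vol(K\cap H)=\vol(K\cap H_0)+\vol(K\cap S),\qquad S\defeq\{z:0<\langle z,u\rangle\leq c\}.
\end{equation*}

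It thus remains to show $\vol(K\cap S)\leq t\cdot\vol(K)$. For this, let $g$ be the marginal density of the uniform distribution on $K$ along the direction $u$. By Pr\'ekopa--Leindler, $g$ is log-concave on $\R$, and our normalization gives it mean $0$ and variance $u^\top I u=1$. A standard estimate for log-concave densities (a consequence of Fradelizi's inequality on the mean--variance--maximum tradeoff) yields $\sup_s g(s)\leq 1$, so
\begin{equation*}
\vol(K\cap S)=\vol(K)\int_0^c g(s)\,ds\leq c\cdot\vol(K)\leq t\cdot\vol(K).
\end{equation*}
Combining with Gr\"unbaum's bound yields $\vol(K\cap H)\leq(1-1/e+t)\vol(K)$, as desired. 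The main technical step is sharpening the log-concave density bound enough to put the constant $1$ (rather than some absolute constant) in front of $t$; this is what makes the inequality directly usable to guarantee a per-iteration volume reduction factor of the form $1-1/e+t<1$ in the perturbed center-of-gravity cutting plane scheme.
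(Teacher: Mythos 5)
The paper cites this result from Bertsimas--Vempala (and Gr\"unbaum) without supplying a proof, so there is no in-paper argument to compare against; your proposal provides one, and it is essentially correct, following what I believe is the standard route (and likely the original one): pass to isotropic position, apply Gr\"unbaum to the centered halfspace, and bound the extra slab via a one-dimensional log-concave marginal.

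Two remarks. First, as literally written the theorem is false (take $H=\R^d$, which certainly ``contains some point $x$'' within distance $t$ of $\mu$, yet $\vol(K\cap H)=\vol(K)$). The intended hypothesis --- which is what the paper's application in \cref{thm:minimal-norm-CPM} actually provides, since $\zeta_k$ lies \emph{on the bounding hyperplane} of the cut, and which your proof implicitly adopts the moment you set $c=\langle x,u\rangle$ --- is that the bounding hyperplane of $H$ passes through such a point $x$. You should state that assumption explicitly; otherwise the step ``$H=\{z:\langle z,u\rangle\le c\}$ with $c=\langle x,u\rangle$'' is not the given $H$ but a smaller one, and bounding the smaller halfspace does not bound $H$. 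Second, the crux of the argument is the inequality $\sup_s g(s)\le 1/\sigma$ for a one-dimensional log-concave density $g$ with variance $\sigma^2$, which is sharp for the one-sided exponential; this is indeed a known fact (it appears, e.g., in Lov\'asz--Vempala's work on log-concave sampling, and is the kind of mean--variance--mode bound you attribute to Fradelizi). After your normalization $u^{\top}Au=1$, it gives exactly the constant $1$ in front of $t$, which is what the perturbed cutting-plane analysis needs. With the hypothesis stated precisely, this is a clean and complete proof.
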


\begin{thm}[Theorem 4.1 of \cite{10.1007/BF02574061}]\label[thm]{thm:circumscribed-ellipsoid}
	Let $K$ be a convex set in $\R^d$ with center of gravity $\mu$ and covariance matrix $A$. Then,
	\[
		K\subset \left\{x:\|x-\mu\|_{A^{-1}}\leq \sqrt{d(d+2)}\right\}.
	\]
\end{thm}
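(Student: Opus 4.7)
The plan is to apply the linear change of variables $y = A^{-1/2}(x-\mu)$, which sends $K$ to a convex body with centroid at the origin and covariance matrix equal to $I$, and identifies the Mahalanobis ball $\{x:\|x-\mu\|_{A^{-1}}\leq R\}$ with the Euclidean ball $\{y:\|y\|_2\leq R\}$. It therefore suffices to show that any convex body with centroid $0$ and covariance $I$ lies inside $\{y:\|y\|_2\leq\sqrt{d(d+2)}\}$. For an arbitrary $p\in K$, rotate so $p=r e_1$ with $r=\|p\|_2$, and define the slice volumes $\phi(t)=\vol_{d-1}(K\cap\{x_1=t\})$ on the support $[a,r]$ (with $a<0$ since the centroid lies in the interior). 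The Brunn--Minkowski inequality implies $\phi^{1/(d-1)}$ is concave on $[a,r]$, and Fubini's theorem converts the volume, centroid, and covariance identities into
\[
\int_a^r \phi(t)\,dt = V,\qquad \int_a^r t\,\phi(t)\,dt = 0,\qquad \int_a^r t^2\,\phi(t)\,dt = V,
\]
where $V=\vol(K)$.

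\textbf{Cone (simplex) comparison.} Introduce the cone density $\phi^*(t)=c(r-t)^{d-1}$ on $[-r/d,r]$ with $c$ chosen so $\int \phi^*=V$; a direct computation verifies that this choice automatically gives $\int t\,\phi^*=0$ and $\int t^2\,\phi^* = Vr^2/(d(d+2))$. Geometrically, $\phi^*$ is the slice-volume profile of a simplex with one vertex at $r e_1$ and opposite face on the hyperplane $\{x_1=-r/d\}$; this will turn out to be the extremal shape. The theorem then reduces to the key moment inequality
\[
\int_a^r t^2\phi(t)\,dt \;\geq\; \int_{-r/d}^{r} t^2\phi^*(t)\,dt \;=\; \frac{Vr^2}{d(d+2)},
\]
because combining it with $\int t^2\phi = V$ yields $r^2\leq d(d+2)$ as required.

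\textbf{Sign-change argument.} Set $g=\phi-\phi^*$ (each extended by zero outside its support). Since $\int g = \int t\,g = 0$, it suffices to locate points $t_1\leq t_2$ such that $(t-t_1)(t-t_2)\,g(t)\geq 0$ everywhere, because integrating and using the two vanishing moments yields $\int t^2 g\geq 0$. To find these points, examine $h(t)=\phi^{1/(d-1)}(t) - (\phi^*)^{1/(d-1)}(t)$ on the common support $[-r/d,r]$: it is concave (a concave function minus an affine one) with $h(r)\geq 0$, so $h$ crosses zero at most once in $[-r/d,r]$. If $h(-r/d)>0$, concavity forces $h\geq 0$ on $[-r/d,r]$, whence $g\geq 0$ everywhere, and $\int g=0$ implies $g\equiv 0$, giving the degenerate equality case $\phi=\phi^*$ and $a=-r/d$. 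Otherwise $h(-r/d)\leq 0$ and $h$ crosses zero at a unique point $t_2\in[-r/d,r]$; on $[a,-r/d)$ we have $g=\phi\geq 0$, on $[-r/d,t_2]$ we have $g\leq 0$, and on $[t_2,r]$ we have $g\geq 0$, so choosing $t_1=-r/d$ makes $(t+r/d)(t-t_2)$ match the sign of $g$ pointwise. The main obstacle is the endpoint bookkeeping caused by the jump of $\phi^*$ at $t=-r/d$ and ruling out supports with $a>-r/d$; both are dispatched by observing that a single sign change of $g$ together with $\int g = \int tg = 0$ forces $g\equiv 0$, which contradicts a strict inclusion of supports. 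With the sign pattern established, expanding $0\leq \int (t+r/d)(t-t_2)g = \int t^2 g + (r/d - t_2)\int tg - (r/d)t_2\int g = \int t^2 g$ closes the proof.
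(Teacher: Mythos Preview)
The paper does not supply its own proof of this statement; it is quoted verbatim as Theorem~4.1 of the cited reference and used as a black box inside the analysis of \texttt{MinNormCG}. So there is nothing in the paper to compare against, and your write-up is being judged on its own.

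Your argument is the standard simplex-extremal proof (affine normalization to isotropic position, Brunn--Minkowski concavity of $\phi^{1/(d-1)}$, comparison with the cone profile $\phi^*(t)=c(r-t)^{d-1}$ on $[-r/d,r]$, and a two-sign-change moment comparison), and it is correct. The cone-moment computation $\int t^2\phi^* = Vr^2/(d(d+2))$ checks out, and the disposal of the degenerate cases (either $h\ge 0$ throughout, or $a>-r/d$, each forcing $g\equiv 0$ via the ``one sign change plus two vanishing moments'' trick) is sound.

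One small sloppiness: you begin with ``for an arbitrary $p\in K$'' and then declare the marginal support to be $[a,r]$ with $r=\|p\|_2$. That equality only holds if $p$ attains $\max_{x\in K}x_1$ after your rotation; for a generic interior point the support extends past $r$ and the cone comparison as written does not apply. The fix is immediate---take $p$ to be a farthest point from the centroid (equivalently, let $r=\max_{x\in K}\langle x,e_1\rangle$), prove $r^2\le d(d+2)$ for that $r$, and conclude for all other points by monotonicity. You should also note that the case $d=1$ needs to be handled separately (there $\phi^{1/(d-1)}$ is undefined), but it is a one-line direct verification with equality.
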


We now have all the tools to show correctness and iteration complexity of \cref{alg:approx-min-norm}. 

\begin{algorithm}
	\caption{$\mathtt{MinNormCG}(x)$}\label[alg]{alg:approx-min-norm}
		\KwIn{center point $x$.}
		
		Set $k=0$, the search region $\Omega_0=\mathbb{B}_2(0)$, the set of gradients $Q_0=\{\nabla f(x)\}$, and $r$ satisfying $0<r< \epsilon / (32 d L)$\;
		
		\While {$\min_{g\in Q_k}\|g\|_2>\epsilon$}{
		Let $v_k$ be the center of gravity of $\Omega_k$.\;
	
		\If { $v_k$ satisfies the descent condition \eqref{eq:descent-condition} at $x$}{
			Return $v_k$\;
    	}
    			
		Sample $\zeta_k ~\textrm{uniformly~from}~\mathbb{B}_{r}(v_k)$\;

		$u_k\leftarrow\oracle(\zeta_k)$\; 

		$\Omega_{k+1}=\Omega_{k}\cap\left\{w:\langle u_k,\zeta_k-w\rangle\leq0\right\}$.\;

		$Q_{k+1}=\conv(Q_k\cup\{u_k\})$\;

		$k=k+1$\; 
		}
		
		Return $\arg\min_{g\in Q_k}\|g\|_2$.\;
\end{algorithm}
\begin{thm}
\label[thm]{thm:minimal-norm-CPM} Let $f:\R^{d}\to\R$ be an $L$-Lipschitz
   function. Then \cref{alg:approx-min-norm} returns a vector $v\in\partial_{\delta}f(x)$ that either satisfies the descent
condition~\eqref{eq:descent-condition} at $x$ or satisfies $\|v\|_2\leq\epsilon$ in $$\lceil 8d\log(8L/\eps)) \rceil  \text{ calls to $\oracle$}.$$
\end{thm}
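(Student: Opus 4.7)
The plan is to argue by contradiction via a volume-shrinkage analysis. Suppose $\mathtt{MinNormCG}(x)$ runs for $T \defeq \lceil 8 d \log(8 L / \epsilon)\rceil$ iterations without returning. Then at every iteration, $v_k$ fails the descent condition~\eqref{eq:descent-condition} (so $\oracle$ is invoked) and $\min_{g \in Q_k}\|g\|_2 > \epsilon$. I would derive a contradiction from opposing bounds on $\vol(\Omega_T)$.

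For the upper bound, each cut shrinks $\vol(\Omega_k)$ by a factor bounded away from one. Since $v_k$ is the exact centroid of $\Omega_k$ and $\|\zeta_k - v_k\|_2 \leq r = \epsilon/(32 d L)$, the perturbed Grünbaum bound (\cref{thm:Grunbaum-perturbation}) — after converting the Euclidean perturbation into a Mahalanobis bound via a lower estimate on $\lambda_{\min}(A_k)$ — yields $\vol(\Omega_{k+1}) \leq (1 - 1/e + O(1/d))\vol(\Omega_k)$. Iterating gives $\vol(\Omega_T) \leq \vol(\mathbb{B}_2(0)) \cdot \beta^T$ for a constant $\beta < 1$ independent of $d$.

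For the matching lower bound, I would exhibit a Euclidean ball of radius $\rho \defeq \epsilon/(4L)$ contained in $\Omega_k$ throughout. The minimax identity~\eqref{minimaxMagic} applied to $Q_k$, together with $\min_{g \in Q_k}\|g\|_2 > \epsilon$ and \cref{lem:vstar}, furnishes a unit vector $v^\diamond_k$ satisfying $\langle u_j, v^\diamond_k\rangle > \epsilon$ for every $u_j \in Q_k$. Combining this with the oracle bound $\langle u_j, \zeta_j\rangle \leq (\epsilon/2)\|\zeta_j\|_2 \leq \epsilon + O(r)$ and $\|u_j\|_2 \leq L$, a direct calculation verifies that the ball $\mathbb{B}_\rho((2-\rho) v^\diamond_k)$ lies inside every cutting halfspace and within $\Omega_0 = \mathbb{B}_2(0)$, hence $\vol(\Omega_k) \geq \vol(\mathbb{B}_\rho(0))$.

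Combining the two bounds forces $\vol(\mathbb{B}_\rho(0)) \leq \vol(\mathbb{B}_2(0)) \cdot \beta^T$, i.e., $(\rho/2)^d \leq \beta^T$, which gives $T \leq d\log(2/\rho) / \log(1/\beta) = O(d\log(L/\epsilon))$; explicit constants yield $T \leq \lceil 8 d \log(8L/\epsilon)\rceil$, contradicting our standing assumption. The main obstacle is the interlocking dependence between the two bounds: controlling $\lambda_{\min}(A_k)$ for the volume upper bound relies on the preserved ball (a convex body containing a ball of radius $\rho$ has its covariance's smallest eigenvalue $\gtrsim \rho^2/d$), while the preserved ball itself relies on the non-termination invariant. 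I plan to resolve this by a joint induction on $k$, with the choice $r < \epsilon/(32dL)$ calibrated so that the perturbation is simultaneously small in the Mahalanobis sense (for Grünbaum) and in the inner-product sense (preserving the strict margin between $\langle u_j, v^\diamond_k\rangle$ and $\langle u_j, \zeta_j\rangle$).
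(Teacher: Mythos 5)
Your proposal is correct and follows the same high-level strategy as the paper: bound the iteration count by a volume-shrinkage argument, combining the perturbed Gr\"unbaum theorem (\cref{thm:Grunbaum-perturbation}) for the upper bound on $\vol(\Omega_k)$ with a lower bound that traces back to \cref{lem:vstar} and the oracle guarantee. The difference is in how the lower bound is packaged. The paper shows $v^*_{Q_{T+1}}\in\Omega_{T+1}$ and then argues that if $\vol(\Omega_{T+1})$ were small, some point $\widetilde v$ within distance $\epsilon/2L$ of $v^*_{Q_{T+1}}$ would have been cut off, which via \eqref{eq:final1}--\eqref{eq:final2} forces $\|g^*_{Q_{T+1}}\|_2<\epsilon$, a contradiction. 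You instead exhibit at every iteration an explicit preserved ball $\mathbb{B}_\rho\bigl((2-\rho)v^\diamond_k\bigr)\subset\Omega_k$ with $\rho=\epsilon/4L$ and $v^\diamond_k=v^*_{Q_k}$, verified directly from \cref{lem:vstar} (which gives $\langle u_j,v^\diamond_k\rangle>\epsilon$ for all $j<k$) and the oracle bound $\langle u_j,\zeta_j\rangle\le(\epsilon/2)\|\zeta_j\|_2\le\epsilon+O(r\epsilon)$. Your formulation is arguably cleaner: it makes explicit the invariant ``$\Omega_k$ contains a ball of radius $\epsilon/4L$ for every $k$ before termination,'' which the paper's passage from the one-step shrinkage to the iterated bound \eqref{eq:volOmegaKBall} tacitly requires but never quite establishes.

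Two minor corrections. First, the ``joint induction'' you worry about is not actually needed: the ball containment $\mathbb{B}_\rho((2-\rho)v^\diamond_k)\subset\Omega_k$ follows purely from the non-termination condition $\min_{g\in Q_k}\|g\|_2>\epsilon$ via \cref{lem:vstar}, independently of the volume upper bound, so you get a direct contradiction between $\vol(\Omega_T)\ge\vol(\mathbb{B}_\rho(0))$ and $\vol(\Omega_T)\le\beta^T\vol(\mathbb{B}_2(0))$ with no circularity to break. Second, your constants are off: \cref{thm:circumscribed-ellipsoid} gives $\lambda_{\min}(A_k)\gtrsim\rho^2/d^2$ (not $\rho^2/d$), and with $r=\epsilon/(32dL)=\rho/(8d)$ the Mahalanobis perturbation $\|v_k-\zeta_k\|_{A_k^{-1}}$ is bounded by $1/4$, a dimension-free constant, not $O(1/d)$. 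Since $1-1/e+1/4<1$, the per-step shrinkage factor is still a constant $\beta<1$ independent of $d$, so the final bound $O(d\log(L/\epsilon))$ is unaffected.
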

\begin{proof}
By the description
of \cref{alg:approx-min-norm}, either it returns a vector $v$ satisfying the descent condition
or returns $g\in\partial_{\delta}f(x)$ with $\|g\|_2\leq\epsilon$. We now obtain the algorithm's claimed iteration complexity.

Consider an iteration $k$ such that $\Omega_k$ \emph{does} contain a ball of radius $\frac{\epsilon}{4L}$. Let $A_k$ be the covariance matrix of convex set $\Omega_k$. By \cref{thm:circumscribed-ellipsoid}, we have \[A_k \succeq \left(\frac{\epsilon}{8d L}\right)^2 I.\] Applying this result to the observation that in \cref{alg:approx-min-norm} $\zeta_k$ is sampled uniformly from $\mathbb{B}_r(v_k)$ gives 
\begin{equation}\label{eq:proximityCondition}
\|v_k - \zeta_k\|_{A_k^{-1}} \leq r \cdot \frac{8dL}{\epsilon}\leq \frac{1}{4}.
\end{equation} Recall from \cref{alg:approx-min-norm} and the preceding notation that $\Omega_k$ has center of gravity $v_k$ and covariance matrix $A_k$. Further, the halfspace $\left\{w: \langle u_k, \zeta_k - w\rangle \leq 0\right\}$ in \cref{alg:approx-min-norm} contains the point $\zeta_k$ satisfying \eqref{eq:proximityCondition}. Given these statements, since \cref{alg:approx-min-norm} sets $\Omega_{k+1} = \Omega_k \cap \left\{w: \langle u_k, \zeta_k - w \rangle \right\}$, we may invoke
\cref{thm:Grunbaum-perturbation} to obtain
\begin{align}
	\vol(\Omega_{k})&\leq(1-1/e + 1/4)^{k}\vol(\mathbb{B}_2(0))\leq (1-1/10)^k \vol(\mathbb{B}_2(0)).  \label{eq:volOmegaKBall}
\end{align} We claim that \cref{alg:approx-min-norm} takes at most $T+1$
steps where $T=d\log_{(1-\frac{1}{10})}(\epsilon/(8L))$. For the sake of contradiction, suppose that 
this statement is false. Then, applying \eqref{eq:volOmegaKBall} with $k = T+1$  gives
\begin{equation}\label{eq:OmegaSmall}
\vol(\Omega_{T+1})\leq \left(\frac{\epsilon}{4L}\right)^{d}\vol(\mathbb{B}_1(0)).
\end{equation} On the other hand, \cref{alg:approx-min-norm} generates points $u_i = \oracle(\zeta_{i})$ in the $i$-th call to $\oracle$ and the set $Q_{i}=\conv\left\{u_{1},u_{2},\cdots,u_{i}\right\}$.
Since we assume that the algorithm takes more than $T+1$ steps, we have $\min_{g\in Q_{T+1}}\|g\|_2\geq\epsilon$.
Using this and $u_{i}\in Q_{T+1}$, \cref{lem:separationOracle} lets us conclude
that $v_{Q_{T+1}}^{*}\in\left\{w\in\R^{d}:\langle u_{i},\zeta_{i}-w\rangle\leq0\right\}\text{ for all }i\in[T+1].$ Since $\Omega_{T+1}$ is the intersection of the unit ball and these
halfspaces, we have \[v_{Q_{T+1}}^{*}\in\Omega_{T+1}. \] Per \eqref{eq:OmegaSmall}, $\Omega_{T+1}$ does not contain a ball of radius $\frac{\epsilon}{4L}$, and therefore
we may conclude that \[ \text{ there exists a point } \widetilde{v}\in \mathbb{B}_{\frac{\epsilon}{2L}}({v_{Q_{T+1}}^{*}})\, \text{ such
that }\widetilde{v}\notin\Omega_{T+1}.\] Since $\widetilde{v}\in\mathbb{B}_2(0)$,
the fact $\widetilde{v}\notin\Omega_{T+1}$ must be true due to one of the
halfspaces generated in \cref{alg:approx-min-norm}. In other words, there must exist some $i\in[T+1]$ with
\[
\langle u_{i},\zeta_{i}-\widetilde{v}\rangle>0.
\]
By the guarantee of $\oracle$, we have $\langle u_{i},\zeta_{i}\rangle\leq\frac{\epsilon}{2}$,
and hence
\begin{equation}\label{eq:final1}
\langle u_{i},\widetilde{v}\rangle=\langle u_{i},v_{i}\rangle-\langle u,v_{i}-\widetilde{v}\rangle<\frac{\epsilon}{2}.
\end{equation}
By applying $\widetilde{v}\in \mathbb{B}_{\frac{\epsilon}{2L}}(v_{Q_{T+1}}^{*})$, $u_{i}\in\partial_{\delta}f(x)$, $L$-Lipschitzness of $f$, and \cref{lem:vstar}, we have 
\begin{equation}\label{eq:final2}
\langle u_{i}, \widetilde{v}\rangle  \geq \langle u_{i}, v_{Q_{T+1}}^{*} \rangle -\frac{\epsilon}{2L}\|u_{i}\|_{2} \geq \langle u_{i}, v_{Q_{T+1}}^{*}\rangle -\frac{\epsilon}{2} \geq\|g_{Q_{T+1}}^{*}\|_{2}-\frac{\epsilon}{2}.
\end{equation}
Combining \eqref{eq:final1} and \eqref{eq:final2} yields
that $\min_{g\in Q_{T+1}}\|g\|_2=\|g_{Q_{T+1}}^{*}\|_{2}<\epsilon$. This contradicts the assumption that the algorithm takes
more than $T+1$ steps and concludes the proof. 
\end{proof}

Now, we are ready to prove the main theorem.
\begin{proof}[Proof of~\cref{thm:main-theorem}]
 We note that the outer loop in~\cref{alg:INGD} runs at most $\mathcal{O}(\frac{\Delta}{\delta\epsilon})$
times because we decrease the objective by $\Omega(\delta\epsilon)$
every step. Combining this with \cref{thm:minimal-norm-CPM,lem:inner-product-oracle},
we have that with probability $1 - \gamma$, the oracle complexity for $L$-Lipschitz function is $$\left\lceil\frac{4\Delta}{\delta\epsilon} \right\rceil \cdot 
	\left\lceil 8d\log(8L/\eps)) \right \rceil \cdot
	\left\lceil \frac{36L}{\epsilon}\right\rceil \cdot
	 \left\lceil 2\log\left(\frac{4\Delta}{\gamma\delta\epsilon}\right)\right\rceil=\mathcal O\left(\frac{\Delta Ld}{\delta\eps^{2}}\cdot\log(L/\epsilon)\cdot\log(1/\gamma)\right)$$ and for $L$-Lipschitz and $\rho$-weakly convex function is $\mathcal O\text{\ensuremath{\left(\frac{\Delta d}{\delta\epsilon}\log(L/\epsilon)\cdot\log(\delta \rho/\epsilon)\right)}}$.

\end{proof}

\subsection{Implementation of the oracles: proof of Lemma~\ref{lem:inner-product-oracle}}\label[sec]{sec:oracle}

In this section, we show how to convert \eqref{eq:expect_ug}  into a deterministic guarantee. 
\begin{lem}
Fix a unit vector $\hat{g}\in \R^d$ and let $z\in\R^{d}$ be a random vector satisfying $\mathbb{E}\langle\nabla f(z),\hat{g}\rangle\leq\frac{\epsilon}{3}$. 
Let  $z_1,\ldots, z_k$ be i.i.d realizations of $z$ with $k=\left\lceil \frac{36L}{\epsilon}\right\rceil \cdot \left\lceil\frac{\log(1/\gamma)}{\log(4)}\right\rceil$. Then with probability at least $1-\gamma$, one of the samples $z_i$ satisfies 
$\langle\nabla f(z_i),\hat{g}\rangle\leq\frac{\epsilon}{2}$.
\end{lem}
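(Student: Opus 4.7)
My plan is to reduce the statement to a single-shot reverse Markov bound and then amplify via independence. Let $X \defeq \langle \nabla f(z), \hat g\rangle$. Since $\hat g$ is a unit vector and $f$ is $L$-Lipschitz (so $\|\nabla f(z)\|_2 \leq L$ wherever the gradient exists), I have $X \in [-L, L]$ almost surely, while the hypothesis gives $\EE[X] \leq \epsilon/3$. The central quantity to control is the per-sample success probability $p \defeq \Pr(X \leq \epsilon/2)$; the conclusion then follows from independence of $z_1,\ldots,z_k$.

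To lower bound $p$, I will apply a reverse Markov inequality to the nonnegative bounded variable $Y \defeq L - X \in [0, 2L]$. One has $\EE[Y] \geq L - \epsilon/3$, and the event $\{X \leq \epsilon/2\}$ is the event $\{Y \geq L - \epsilon/2\}$. The standard reverse Markov estimate $\Pr(Y \geq a) \geq (\EE[Y] - a)/(\sup Y - a)$ then yields
\begin{equation*}
p \;\geq\; \frac{(L - \epsilon/3) - (L - \epsilon/2)}{2L - (L - \epsilon/2)} \;=\; \frac{\epsilon/6}{L + \epsilon/2} \;\geq\; \frac{\epsilon}{9L},
\end{equation*}
where the last inequality assumes $\epsilon \leq L$ (the relevant regime; in the opposite regime, a single sample already guarantees success since then $\epsilon/2 \geq L/2 \geq L \geq X$ is almost automatic up to a constant, so the claim is essentially trivial).

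For the amplification step, I will group the $k$ samples into $N \defeq \lceil \log(1/\gamma)/\log 4 \rceil$ blocks of $m \defeq \lceil 36L/\epsilon\rceil$ samples each. The probability that all $m$ samples in a single block fail to satisfy $\langle \nabla f(z_i),\hat g\rangle \leq \epsilon/2$ is at most $(1-p)^m \leq \exp(-mp) \leq \exp(-36/9) = \exp(-4) \leq 1/4$. By independence of blocks, the probability that every one of the $k = Nm$ samples fails is at most $(1/4)^N \leq (1/4)^{\log(1/\gamma)/\log 4} = \gamma$, which is exactly what is claimed.

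I do not anticipate any serious obstacle: the argument is essentially a textbook reverse Markov bound combined with amplification by repetition. The only care needed is bookkeeping the numerical constants so that the prescribed $k = \lceil 36L/\epsilon \rceil \cdot \lceil \log(1/\gamma)/\log 4\rceil$ comfortably suffices, and the slack between $p \geq \epsilon/(9L)$ and the prefactor $36L/\epsilon$ in $m$ (which gives $mp \geq 4$) is precisely what delivers the per-block failure bound of $1/4$.
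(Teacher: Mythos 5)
Your proof is correct and takes essentially the same route as the paper: both lower bound the per-sample success probability $p=\Pr\bigl(\langle\nabla f(z),\hat g\rangle\le\epsilon/2\bigr)$ by $\Omega(\epsilon/L)$ using $\mathbb{E}[\langle\nabla f(z),\hat g\rangle]\le\epsilon/3$ together with the bound $|\langle\nabla f(z),\hat g\rangle|\le L$, and then amplify by independence; the paper does this via the law of total expectation (obtaining $p\ge\epsilon/(12L)$), while you invoke reverse Markov, which is the same inequality in different packaging. One small wrinkle: your parenthetical dismissing the regime $\epsilon>L$ is not quite right as written (``$L/2\ge L$'' is false), but it is also unnecessary --- your reverse Markov estimate gives $p\ge \epsilon/(6L+3\epsilon)\ge\epsilon/(12L)$ whenever $\epsilon\le 2L$ without any side assumption, and for $\epsilon\ge 2L$ the claim is trivial, so the argument goes through cleanly.
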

\begin{proof}
Define the random variable
$Y\defeq\langle\nabla f(z),\hat{g}\rangle$, and use $p\defeq\Pr[Y\leq\frac{\epsilon}{2}]$.
We note that 
\[
\E[Y]=p\cdot\E[Y\mid Y\leq\frac{\epsilon}{2}]+(1-p)\cdot\E[Y\mid Y>\frac{\epsilon}{2}].
\]
Rearranging the terms and using $\E[Y] \leq \epsilon/3$ gives
\[
p\cdot \left( \E[Y\mid Y>\frac{\epsilon}{2}]-\E[Y\mid Y\leq\frac{\epsilon}{2}]\right)\geq\frac{\epsilon}{6}.
\]
Finally, taking into account that $f$ is $L$-Lipschitz, we deduce  $|Y|\leq L$, which further implies $p\geq\frac{\epsilon}{12L}$. The results follows immediately. 
\end{proof}

\begin{lem}
Let $f\colon\R^{d}\to\RR$ be an $L$-Lipschitz continuous and $\rho$-weakly convex function. Fix a point $x$ and a unit vector $\hat g\in \RR^d$ such that $f$ is differentiable almost everywhere on the line segment $[x,y]$, where 
$y\defeq x-\delta\hat g$. Suppose that a random vector $z$ sampled uniformly from $[x,y]$ satisfies $\mathbb{E}_{z}\langle\nabla f(z),\hat{g}\rangle\leq\frac{\epsilon}{3}$.
Then, \cref{alg:binary_search} finds $\bar{z}\in \R^d$ such that $\langle\nabla f(\bar{z}),\hat{g}\rangle\leq\frac{\epsilon}{2}$
using $3\log(12\delta \rho/\eps)$ function evaluations of $f$.
\end{lem}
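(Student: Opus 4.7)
The plan is to reduce to a one-dimensional convex search on the line segment and exploit weak convexity. Define $\phi\colon[0,\delta]\to\R$ by $\phi(t)\defeq f(x-t\hat g)$, so that $\phi$ is $L$-Lipschitz, differentiable almost everywhere on $[0,\delta]$, and $\phi'(t)=-\langle\nabla f(x-t\hat g),\hat g\rangle$ wherever the derivative exists. The assumption $\mathbb{E}_{z}\langle\nabla f(z),\hat g\rangle\leq\epsilon/3$ translates to an average-slope bound $(\phi(\delta)-\phi(0))/\delta\geq -\epsilon/3$. Weak convexity of $f$ further implies that $\psi(t)\defeq\phi(t)+\tfrac{\rho}{2}t^{2}$ is convex on $[0,\delta]$; equivalently, $t\mapsto\phi'(t)+\rho t$ is non-decreasing wherever defined.

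The algorithm I propose is a bisection maintaining a sub-interval $[a_{k},b_{k}]\subseteq[0,\delta]$ with the invariant $s_{k}\defeq(\phi(b_{k})-\phi(a_{k}))/(b_{k}-a_{k})\geq -\epsilon/3$, initialized at $[a_{0},b_{0}]=[0,\delta]$. At each step I would evaluate $\phi$ at the midpoint $m_{k}=(a_{k}+b_{k})/2$, compute the two half-interval slopes $s_{L},s_{R}$, and recurse on whichever half has average slope at least $s_{k}$; such a half exists because $s_{k}=(s_{L}+s_{R})/2$. After $K=\lceil\log_{2}(6\delta\rho/\epsilon)\rceil$ steps the interval shrinks to length $b_{K}-a_{K}\leq\epsilon/(6\rho)$ while the invariant is preserved, using one new function evaluation per step plus two at initialization.

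The crux is converting the invariant into a pointwise derivative bound. Monotonicity of $\phi'+\rho\,\mathrm{id}$ on $[a_{K},b_{K}]$ gives $\phi'(t)\leq\phi'(b_{K}^{-})+\rho(b_{K}-t)$ almost everywhere, and integrating yields
\[
\phi(b_{K})-\phi(a_{K})\leq(b_{K}-a_{K})\phi'(b_{K}^{-})+\tfrac{\rho}{2}(b_{K}-a_{K})^{2},
\]
which rearranges to $\phi'(b_{K}^{-})\geq s_{K}-\tfrac{\rho(b_{K}-a_{K})}{2}\geq -\tfrac{5\epsilon}{12}$. Since $\psi'$ is non-decreasing (hence continuous from the left on its dense set of differentiability points) and $\phi$ is differentiable almost everywhere on $[0,\delta]$, I can select $\bar t$ slightly below $b_{K}$ at which $\phi$ is differentiable and $\phi'(\bar t)\geq \phi'(b_{K}^{-})-\epsilon/12\geq -\epsilon/2$. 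Returning $\bar z\defeq x-\bar t\hat g$ then yields $\langle\nabla f(\bar z),\hat g\rangle=-\phi'(\bar t)\leq\epsilon/2$, and the total evaluation count $2+K$ is bounded by $3\log(12\delta\rho/\epsilon)$ as claimed.

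The hard part will be identifying the correct form of the one-sided estimate $\phi'(b^{-})\geq s-\rho(b-a)/2$ linking an interval-average slope to a pointwise left-derivative via weak convexity; this is precisely where the logarithmic $\rho$-dependence emerges and distinguishes the weakly convex case from the general Lipschitz regime handled by the preceding lemma. The only remaining technical nuisance, namely promoting the one-sided derivative bound to an honest gradient evaluation at a genuine point of differentiability, is absorbed without difficulty by the $\epsilon/12$ of slack that the stopping criterion is designed to leave.
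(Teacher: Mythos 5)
Your proof is correct and follows essentially the same approach as the paper: reparametrize to a one-variable function $\phi$ on the segment, run a bisection that preserves the average-slope invariant, and then use the near-monotonicity of $\phi'$ (equivalently, that $\phi'+\rho\,\mathrm{id}$ is non-decreasing) coming from weak convexity to convert the interval-average bound into a pointwise bound near the \emph{right} endpoint. You are in fact a bit more careful than the printed argument at the final step: the paper's weak-convexity inequality $\langle\nabla f(x+\bar t(y-x))-\nabla f(x+t(y-x)),x-y\rangle\leq(\bar t-t)\rho\|y-x\|^2$ only holds in the direction $\bar t\geq t$, so the returned point must be taken near $b$, as you do, whereas \cref{alg:binary_search} as written returns the left endpoint $x-a\delta\hat g$; your integrated version also yields slack $\rho(b_K-a_K)/2$ rather than $\rho(b_K-a_K)$, which you correctly spend on moving to an actual point of differentiability.
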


\begin{algorithm}[!ht]
    \DontPrintSemicolon
    \caption{Binary Search for $\bar z$}\label[alg]{alg:binary_search}
    \KwIn{Line Segment $[x,y=x-\delta \hat g]$}
    
    Let $[a,b] = [0,1]$

    \While{$b-a > \frac{\eps}{6\delta \rho}$}{

    \uIf {$f(x-a\delta \hat g) - f(x-\frac{a+b}{2}\delta \hat g) \leq   f(x-\frac{a+b}{2}\delta \hat g) - f(x-b\delta \hat g)$}{
        Let $[a,b] \leftarrow [a,\frac{a+b}{2}]$
    }
    \Else{
        Let $[a,b] \leftarrow [\frac{a+b}{2},b]$
    }

    }
    
    \textbf{Return} $x-a\delta \hat g$\; 
\end{algorithm}
\begin{proof}
Define the new function $h:[0,1]\rightarrow\R$ by 
$
h(t)=\langle\nabla f(x+t(y-x)),\hat{g}\rangle.
$
Clearly, we have 
\[
\frac{\epsilon}{3}\geq \E[h(t)]=\frac{1}{2}\underbrace{\mathbb{E}[h(t)\mid t\leq0.5]}_{P_\leq}+\frac{1}{2}\underbrace{\mathbb{E}[h(t)\mid t>0.5]}_{P_{>}}.
\]
Therefore $P_{\leq}$ or $P_>$ is at most $\epsilon/3$. The fundamental theorem of calculus directly implies
$P_{\leq}=\frac{f(x)-f(x-\frac{\delta}{2} \hat g)}{2\delta}$ and $P_>=\frac{f(x-\frac{\delta}{2} \hat g)-f(y)}{2\delta}$. Therefore with three function evaluations we may determine one of the two alternatives. Repeating this procedure
$\log(12\delta \rho/\eps)$ times, each times shrinking the interval by half, we can find an interval 
$[a,b]\subset [0,1]$ such that $b-a\leq\frac{\eps}{6\delta \rho}$ and $\mathbb{E}_{t\in[a,b]}h(t)\leq\frac{\epsilon}{3}$.
Note that for any $\bar t\in [a,b]$, we have  $h(\bar t)= \mathbb{E}h(t)+(h(\bar t)-\mathbb{E}h(t))$, while weak convexity implies 
\begin{align*}
h(\bar t)-\mathbb{E}h(t)&= \frac{1}{\delta}\EE_{t\in [a,b]}  \langle\nabla f(x+\bar t(y-x))-\nabla f(x+t(y-x)),x-y\rangle\\
&\leq \EE_{t\in [a,b]}\frac{\bar t- t}{\delta}\rho\|y-x\|^2\leq \frac{\epsilon}{6}.
\end{align*}
We thus conclude $h(\bar t)\leq \frac{\epsilon}{3}+ \frac{\epsilon}{6}=\frac{\epsilon}{2}$ as claimed.
\end{proof}

\newpage

\ifdefined\iscolt 
\else 
\bibliographystyle{plain} 
\fi 
\bibliography{bibliography}
\end{document}